\newtheorem{theorem}{Theorem}[section]
\newtheorem{lemma}[theorem]{Lemma}
\newtheorem{definition}[theorem]{Definition}
\newtheorem{remark}[theorem]{Remark}
\newtheorem{corollary}[theorem]{Corollary}
\newtheorem{question}[theorem]{Question}
\numberwithin{equation}{section}
\newcommand{\DTV}{\mathrm{d}_{\mathrm{TV}}}
\title[On cutoff via rigidity for high-dimensional curved diffusions]{On cutoff via rigidity for\\high-dimensional curved diffusions}
\date{Autumn 2024; Revised Spring 2025; Accepted Summer 2025; Published Autumn
  2025; Revised Winter 2026. Postpublication version for \emph{Comptes Rendus.
    Mathématique 2025, Vol.~363, p.~1103-1121} DOI:
  \href{https://doi.org/10.5802/crmath.776}{10.5802/crmath.776}}
\author{Djalil Chafaï}
{\footnotesize\address[DC]{DMA, École normale supérieure, 45 rue d'Ulm, 75230, Paris; and CEREMADE, Université Paris-Dauphine, Place du Maréchal de Lattre de Tassigny, 75775, Paris; PSL, CNRS}}
\email{djalil@chafai.net}
\urladdr{https://djalil.chafai.net/}
\author{Max Fathi}
{\footnotesize\address[MF]{Université Paris Cité and Sorbonne Université, CNRS, Laboratoire Jacques-Louis Lions and Laboratoire de Probabilités, Statistique et Modélisation, F-75013 Paris, France\\
and DMA, École normale supérieure, Université PSL, CNRS, 75005 Paris, France \\
and Institut Universitaire de France}}
\email{mfathi@lpsm.paris}
\keywords{Markov diffusion process; Curvature-dimension inequality; Spectral gap; Ergodicity and Cutoff; Wasserstein distance; Relative entropy; Total variation; Fisher information}%
\subjclass{%
Primary 60J60; 
Secondary 58J65; 
47D07; 
53C21; 
58J50. 
}
\begin{document}

\begin{abstract}
  We consider overdamped Langevin diffusions in Euclidean space, with
  curvature equal to the spectral gap. This includes the
  Ornstein--Uhlenbeck process as well as non-Gaussian and non-product
  extensions with convex interaction, such as the Dyson process from random
  matrix theory. We show that a cutoff phenomenon or abrupt convergence to
  equilibrium occurs in high dimension, at a critical time equal to the
  logarithm of the dimension divided by twice the spectral gap. This cutoff
  holds for Wasserstein distance, total variation, relative entropy, and
  Fisher information. A key observation is a relation to a spectral rigidity,
  linked to the presence of a Gaussian factor. A novelty is the extensive
  usage of functional inequalities, even for short-time regularization, and
  the reduction to Wasserstein. The proofs are short and conceptual. Since the
  product condition is satisfied, an Lp cutoff holds for all p. We moreover
  discuss a natural extension to Riemannian manifolds, a link with logarithmic
  gradient estimates in short-time for the heat kernel, and ask about
  stability by perturbation. Finally, beyond rigidity but still for
  diffusions, a discussion around the recent progress on the product condition
  for non-negatively curved diffusions leads us to introduce a new curvature
  product condition.
\end{abstract}

\maketitle

{\footnotesize\tableofcontents}

\section{Introduction and main results}

\subsection{Diffusions on Euclidean spaces, with convex potential}

Let ${(X_t)}_{t\geq0}$ be the Markov diffusion process solving the stochastic differential equation (SDE)
\begin{equation} \label{eq:eds}
\mathrm{d}X_t = -\nabla V(X_t)\mathrm{d}t + \sqrt{2}\mathrm{d}B_t,
\quad X_0=x_0\in\mathbb{R}^d,
\end{equation}
where ${(B_t)}_{t\geq0}$ is a standard Brownian motion in $\mathbb{R}^d$, $V:\mathbb{R}^d\to\mathbb{R}$ is strictly convex and $\mathscr{C}^2$ with
$\lim_{|x|\to\infty}V(x)=+\infty$, and $\left|\cdot\right|$ is the Euclidean norm of $\mathbb{R}^d$. In Statistical Physics, this drift-diffusion is also known as an overdamped Langevin process with potential $V$. By adding a constant to $V$, we can assume without loss of generality that 
$\mu=\mathrm{e}^{-V}$ namely
\begin{equation}\label{eq:mu}
\mathrm{d}\mu(x)=\mathrm{e}^{-V(x)}\mathrm{d}x
\end{equation}
is a probability measure. It is the unique invariant law of the process, and it is moreover reversible. The associated infinitesimal generator is the linear differential operator 
\begin{equation}\label{eq:L}
\mathcal{L} = \Delta - \nabla V \cdot \nabla
\end{equation}
acting on smooth functions. It is symmetric in $L^2(\mu)$, and its kernel is
the set of constant functions. Moreover, its spectrum is included in
$(-\infty,-\lambda_1]\cup\{0\}$, for some $\lambda_1>0$ called the spectral
gap of $\mathcal{L}$. The Ornstein--Uhlenbeck (OU) process is obtained
when $V(x)=\frac{\rho}{2}|x|^2$, $\rho>0$, for which $\lambda_1=\rho$ while
$\mathrm{Hess}(V)(x)=\rho \mathrm{I}_d$ for all $x\in\mathbb{R}^d$.

If $V$ is $\rho$-convex for some $\rho>0$, namely if
$V-\frac{\rho}{2}\left|\cdot\right|^2$ is convex, then the spectral gap is an
eigenvalue of $-\mathcal{L}$, since the spectrum is discrete, see \cite[Prop.~6.7]{GMS15}.

\subsection{Cutoff for high-dimensional curved diffusions}

Let us denote by $\mathrm{W}_2$ the $L^2$ Wasserstein (or
Monge--Kantorovich) coupling distance between probability measures on the
same metric space with finite second moment, namely
\begin{equation}
  \mathrm{W}_2(\nu,\mu)=\inf_{(X,Y)}\sqrt{\mathbb{E}(d(X,Y)^2)}
\end{equation}
where the infimum runs over all couples $(X,Y)$ of random variables with
$X\sim\nu$ and $Y\sim\mu$, see \cite{MR3409718}. We also write
$\mathrm{W}_2(Z,\mu)=\mathrm{W}_2(\mathrm{Law}(Z),\mu)$. We take
$d(x,y)=|x-y|$ on $\mathbb{R}^k$.

\begin{theorem}[Main Wasserstein estimate]\label{th:maineucl}
  Let ${(X_t)}_{t\geq0}$ be the process \eqref{eq:eds}, with potential $V$,
  spectral gap $\lambda_1$, and invariant law $\mu=\mathrm{e}^{-V}$. If, for
  all $x\in\mathbb{R}^d$, and as quadratic forms,
  \begin{equation}\label{eq:curgap}
    \mathrm{Hess}(V)(x) \geq \lambda_1\mathrm{I}_d,
  \end{equation}
  then for all non-empty set of initial conditions $S\subset\mathbb{R}^d$,
  \begin{equation}\label{eq:maineucl}
    \frac{\mathrm{e}^{-2\lambda_1 t}}{\lambda_1}\sup_{x_0 \in
        S}\Lambda(x_0)
       +\frac{k_1\bigl(1-\sqrt{1-\mathrm{e}^{-2\lambda_1 t}}\bigr)^2}{\lambda_1}
    \leq\sup_{x_0\in S}\mathrm{W}_2(X_t, \mu)^2
	  \leq \mathrm{e}^{-2\lambda_1 t} \sup_{x_0 \in S}\int|x-x_0|^2\mathrm{d}\mu(x)
  \end{equation}
  with, denoting $E_1$ the eigenspace of $-\mathcal{L}$ associated to
  $\lambda_1$ and $k_1=\dim(E_1)$,
  \begin{equation}
    \Lambda(x_0)
    =\sup_{(f_1,\ldots,f_{k_1})}\sum_{i=1}^{k_1}|f_i(x_0)|^2,
  \end{equation}
  where the supremum runs over the set of orthonormal bases of $E_1$.\\
  Moreover, if $\mu$ is centered and $S=\{x\in\mathbb{R}^d:|x|\leq R\}$, then
  $\displaystyle\sup_{x_0 \in S}\Lambda(x_0)$ can be replaced by
  $\lambda_1R^2$.
\end{theorem}

\begin{remark}[$k_1$ and $\Lambda(x_0)$]
  We always have $k_1 \leq d$, see \cite[Lem.~14]{BaKl20}. The factor involving $k_1$ could be dropped in the lower bound, since it is nonnegative. This will be good enough for the proof of Corollary \ref{cor_cutoff_langevin_general} below, but there are situations, such as very small sets of initial data, where the extra factor is useful to get sharp bounds.\\
  By elliptic regularity, the elements of $E_1$ are $\mathcal{C}^2$ and in particular continuous, see \cite[Ch.~6]{Evans10}.
  Moreover, since $E_1$ is finite dimensional, the linear map
  $f\in E_1\mapsto\ell(f)=f(x_0)$ is continuous, and by the Riesz
  representation theorem, there exists an element $h_{x_0}\in E_1$ such that
  $\ell=\langle \cdot,h_{x_0}\rangle$, and then the Parseval identity gives
  $\sum_{i=1}^{k_1}|f_i(x_0)|^2=\sum_{i=1}^{k_1}|\langle
  f_i,h_{x_0}\rangle|^2=\|h_{x_0}\|^2$ for all orthonormal basis
  $f_1,\ldots,f_{k_1}$ of $E_1$. Therefore the supremum in the definition of
  $\Lambda(x_0)$ is useless.
\end{remark}

\begin{corollary}[Wasserstein cutoff] \label{cor_cutoff_langevin_general} %
  Let $(X_t^{(d)})$, $V^{(d)}$, $\lambda_1^{(d)}$, $\mu^{(d)}$ be as in
  Theorem~\ref{th:maineucl}, satisfying \eqref{eq:curgap} for any dimension
  $d$. Let $m^{(d)}$ be the mean of $\mu^{(d)}$, and assume that
  \begin{equation}
    \liminf_{d\to\infty}\lambda_1^{(d)} > 0.
  \end{equation}
  Then a cutoff phenomenon occurs at critical time
  \begin{equation}
    t_*=\frac{\log(d)}{2\lambda_1^{(d)}},
  \end{equation}
  namely for all
  $\varepsilon>0$,
  \begin{equation}
    \lim_{d\to\infty}
    \sup_{x^{(d)}_0\in S^{(d)}}
    \mathrm{W}_2(X^{(d)}_{t_d},\mu^{(d)})
    =
    \begin{cases}\label{eq:cutoff:W2}
      +\infty & \text{if $t_d=(1-\varepsilon)t_*$}\\
      0 & \text{if $t_d=(1+\varepsilon)t_*$}
    \end{cases}
  \end{equation}
  where the set of initial conditions is a ball of the following form
  \begin{equation}
    S^{(d)}=\{x\in\mathbb{R}^d:|x-m^{(d)}|\leq c\sqrt{d}\},\quad\text{for an arbitrary constant $c>0$}.
  \end{equation}
\end{corollary}

\bigskip

The assumption $c>0$ is essential to get a cutoff at critical time
$\log(d)/(2\lambda_1^{(d)})$. Indeed, in the case of the $d$-dimensional
Ornstein--Uhlenbeck process with $S^{(d)}=\{m^{(d)}\}$, one has a cutoff
at critical time $\log(d)/(4\lambda_1^{(d)})$, and this is an effect of the
second part of the left hand side of \eqref{eq:maineucl}.

\bigskip

For probability measures $\mu$ and $\nu$ on the same space, we denote by
\begin{equation}
    \DTV(\mu,\nu)=\sup_A|\mu(A)-\nu(A)|\in[0,1]
\end{equation}
their total variation distance, and by
\begin{equation}
\mathrm{H}(\nu\mid\mu)
=\int\frac{\mathrm{d}\nu}{\mathrm{d}\mu}\log\Bigl(\frac{\mathrm{d}\nu}{\mathrm{d}\mu}\Bigr)\mathrm{d}\mu
=\int\log\Bigl(\frac{\mathrm{d}\nu}{\mathrm{d}\mu}\Bigr)\mathrm{d}\nu
\in[0,+\infty]
\end{equation}
the relative entropy or Kullback--Leibler divergence of $\nu$ with respect
to $\mu$, with convention $\mathrm{H}(\nu\mid\mu)=+\infty$ if $\nu$ is not
absolutely continuous with respect to $\mu$. The Fisher information of $\nu$ with respect to $\mu$ is 
\begin{equation}
  \mathrm{I}(\nu\mid\mu)
  =\int\frac{|\nabla\frac{\mathrm{d}\nu}{\mathrm{d}\mu}|^2}{\frac{\mathrm{d}\nu}{\mathrm{d}\mu}}\mathrm{d}\mu
  =\int\Bigl|\nabla\log\Bigl(\frac{\mathrm{d}\nu}{\mathrm{d}\mu}\Bigr)\Bigr|^2\mathrm{d}\nu
  \in[0,+\infty],
\end{equation}
with the convention $\mathrm{I}(\nu\mid\mu)=+\infty$ if $\nu$ is not absolutely continuous with respect to $\mu$. 
By an abuse of notation, we take the freedom of replacing $\nu$ by $X\sim\nu$ in these expressions. 

\begin{corollary}[I, H, and TV cutoffs] \label{co:cutoff:tv:H} Under the
  setting of Corollary~\ref{cor_cutoff_langevin_general}, and if additionally
  \begin{equation}
    \lambda_1^{(d)}=o_{d\to\infty}(\log(d))
  \end{equation}
  then, for all $\varepsilon>0$,
 \begin{align}
     \lim_{d\to\infty}
     \sup_{x^{(d)}_0\in S^{(d)}}
     \mathrm{I}(X^{(d)}_{t_d}\mid\mu^{(d)})
     &=
     \begin{cases}
     +\infty & \text{if $t_d=(1-\varepsilon)t_*$}\\
     0 & \text{if $t_d=(1+\varepsilon)t_*$}
     \end{cases}\label{eq:cutoff:I}\\
     \lim_{d\to\infty}
     \sup_{x^{(d)}_0\in S^{(d)}}
     \mathrm{H}(X^{(d)}_{t_d}\mid\mu^{(d)})
     &=
     \begin{cases}
     +\infty & \text{if $t_d=(1-\varepsilon)t_*$}\\
     0 & \text{if $t_d=(1+\varepsilon)t_*$}
     \end{cases}\label{eq:cutoff:H}\\
     \lim_{d\to\infty}
     \sup_{x^{(d)}_0\in S^{(d)}}
     \DTV(X^{(d)}_{t_d},\mu^{(d)})
     &=
     \begin{cases}
     1 & \text{if $t_d=(1-\varepsilon)t_*$}\\
     0 & \text{if $t_d=(1+\varepsilon)t_*$}
     \end{cases}. \label{eq:cutoff:TV}
 \end{align} 
\end{corollary}

We emphasize that in contrast with what is done for instance by \cite{BCL23}
and \cite{salez-2025}, we obtain the $\mathrm{I}$, $\mathrm{H}$, and
$\mathrm{TV}$ cutoffs from the Wasserstein cutoff. Moreover, they occur at
exactly the same critical time, due to the choice of initial condition we
make. In comparison, \cite{salez-2025} works under more general curvature
assumptions, and uses Fisher information and entropy to derive a differential
inequality controlling entropy beyond the mixing time, and then proves cutoff
in total variation distance using a reverse Pinsker inequality. All these
approaches are crucially based on functional inequalities.

Condition \eqref{eq:curgap} states that the process has curvature at least
equal to the spectral gap. It turns out that it is the best possible lower
bound on the curvature, as we explain later on in relation with a notion of
spectral rigidity. Condition \eqref{eq:curgap} is satisfied by the OU process
with $V=\frac{\rho}{2}\left|\cdot\right|^2$, and in this case, we have
$\lambda_1=\rho$, $k_1=d$, and $f_i(x)=\sqrt{\rho}x_i$. An important class of
non-Gaussian and non-product examples beyond pure OU is
\begin{equation}\label{eq:VW}
    V(x)=\frac{\rho}{2}|x|^2+W(x), \quad x\in\mathbb{R}^d,
\end{equation}
where $\rho>0$ and $W:\mathbb{R}^d\to\mathbb{R}$ is convex and
translation invariant in the direction $(1,\ldots,1)\in\mathbb{R}^d$, namely
for all $u\in\mathbb{R}$ and all $x\in\mathbb{R}^d$,
\begin{equation}
    W(x+u(1,\ldots,1))=W(x).
\end{equation}
This is the case for example when
for some convex even function $h:\mathbb{R}\to\mathbb{R}$,
\begin{equation}
W(x)=\sum_{i<j}h(x_i-x_j),\quad x\in\mathbb{R}^d.
\end{equation}
If $\pi$ and $\pi^\perp$ are the orthogonal projections on
$\mathbb{R}(1,\ldots,1)$ and its orthogonal, respectively, then
$|x|^2=|\pi(x)|^2+|\pi^\perp(x)|^2$, while the translation invariance of $W$
in the direction $(1/\sqrt{d},\ldots,1/\sqrt{d})$ gives
$W(x)=W(\pi(x)+\pi^\perp(x))=W(\pi^\perp(x))$, therefore
\begin{equation}
\mathrm{e}^{-V(x)}
=\mathrm{e}^{-\frac{\rho}{2}|\pi(x)|^2}\mathrm{e}^{-(W(\pi^\perp(x))+\frac{\rho}{2}|\pi^\perp(x)|^2)}
\end{equation}
which means that $\mu$ is, up to a rotation, a product measure, and splits
into a one-dimensional Gaussian factor $\mathcal{N}(0,\frac{1}{\rho})$ and a
log-concave factor with a $\rho$-convex potential.

\begin{theorem}[Boltzmann--Gibbs probability measure with convex
  interaction] \label{thm_bg_convex_inter} Let us consider the Langevin
  process \eqref{eq:eds} with $V$ as in \eqref{eq:VW} with $\rho>0$ and with $W$
  convex and translation invariant in the direction $(1,\ldots,1)$. Then the
  following properties hold true:
  \begin{enumerate}[label=(\roman*)]
  \item $\lambda_1=\rho$ and the symmetric Hermite polynomial $x_1+\cdots+x_d$ belongs to $E_1$. 
  \item $V$ is $\rho$-convex and $\mu$ has a Gaussian factor
    $\mathcal{N}(0,\frac{1}{\rho})$ in the direction $(1,\ldots,1)$.
  \item The curvature condition \eqref{eq:curgap} is satisfied. 
  \end{enumerate}
\end{theorem}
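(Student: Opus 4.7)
My plan is to work through (iii) first, use it to obtain one half of the spectral gap identity, then produce an explicit eigenfunction for the matching half, and finally recover the Gaussian factor in (ii) from the decomposition already visible in the introduction. The driving observation is that the translation invariance of $W$ in the direction $(1,\ldots,1)$ forces $\sum_i \partial_i W \equiv 0$; this single identity is responsible both for the Gaussian factor and for the existence of a linear eigenfunction saturating the Bakry\,--\,Émery bound. Concretely, I would start by differentiating $W(\cdot+u(1,\ldots,1))=W$ at $u=0$ to record $\sum_i \partial_i W(x)=0$ for every $x$, an identity I will use twice below.

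For (iii), I would simply note that $\Hess V = \rho\,\mathrm{I}_d + \Hess W \geq \rho\,\mathrm{I}_d$ pointwise since $W$ is convex. This yields \eqref{eq:curgap} as soon as $\lambda_1=\rho$ is known, and it gives the $\rho$-convexity of $\mu$ asserted in (ii). Applying the Bakry\,--\,Émery criterion to the $\rho$-convex potential $V$, I get the Poincaré-type bound $\lambda_1\geq\rho$. For the matching upper bound and the first assertion of (i), I set $f(x)=x_1+\cdots+x_d$. Then $\Delta f=0$ and $\nabla V\cdot\nabla f=\sum_i\partial_i V=\rho f+\sum_i\partial_i W=\rho f$, so $\cL f=-\rho f$. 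Hence $\rho$ is a nonzero eigenvalue of $-\cL$, which forces $\lambda_1\leq\rho$; combined with Bakry\,--\,Émery this gives $\lambda_1=\rho$ and $f\in E_1$, completing (i) and, by substitution, (iii).

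To finish (ii), I would invoke the decomposition carried out in the paragraph preceding the theorem: writing $\pi$ and $\pi^\perp$ for the orthogonal projections onto $\R(1,\ldots,1)$ and its orthogonal, translation invariance of $W$ yields $W(x)=W(\pi^\perp(x))$, so
\begin{equation*}
\mathrm{e}^{-V(x)}=\mathrm{e}^{-\frac{\rho}{2}|\pi(x)|^2}\,\mathrm{e}^{-W(\pi^\perp(x))-\frac{\rho}{2}|\pi^\perp(x)|^2}.
\end{equation*}
Up to normalization, the first factor is exactly $\mathcal{N}(0,1/\rho)$ on the line spanned by $(1,\ldots,1)/\sqrt{d}$, which is the desired Gaussian factor. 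There is no real obstacle here: every step is essentially one line. The conceptual point I would emphasize, however, is that the same identity $\sum_i\partial_i W=0$ both decouples the Gaussian direction and produces the explicit $\lambda_1$-eigenfunction, so the Bakry\,--\,Émery bound is saturated \emph{because of} the Gaussian factor, which is precisely the rigidity mechanism invoked elsewhere in the paper.
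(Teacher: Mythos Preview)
Your proof is correct and follows essentially the same route as the paper: establish $\Hess V\geq\rho\,\mathrm{I}_d$ from the convexity of $W$, deduce $\lambda_1\geq\rho$ via Bakry\,--\,\'Emery, then verify $\cL(x_1+\cdots+x_d)=-\rho(x_1+\cdots+x_d)$ using $\nabla W\cdot(1,\ldots,1)=0$ to conclude $\lambda_1=\rho$. The only cosmetic difference is that for the Gaussian factor in (ii) you invoke the explicit product decomposition displayed just before the theorem, whereas the paper's proof phrases this as an application of the splitting theorem; your route is slightly more self-contained but otherwise identical.
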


This approach can be adapted to the special degenerate case of the
Dyson--OU or DOU process studied in \cite{MR4175749,BCL23}, when
\begin{equation}
    h(x)=
    \begin{cases}
        -\beta\log(x) & \text{if $x>0$}\\
        +\infty & \text{if $x\leq0$}
    \end{cases},\quad\text{for an arbitrary constant $\beta\geq0$},
\end{equation}
the degeneracy being equivalent to define the Dyson--OU process on
the convex domain $\{x\in\mathbb{R}^d:x_1>\cdots>x_d\}$ instead of on the
whole space $\mathbb{R}^d$, to exploit convexity. In this case, the symmetric
Hermite polynomial $x_1+\cdots+x_d$ is an eigenfunction associated to the
spectral gap, the tip of an iceberg of integrability, as observed
in \cite{MR1133488}.

\begin{corollary}[Cutoff for Langevin with convex interactions]\label{cor_convex_inter} %
  Let $(X_t^{(d)})$, $V^{(d)}$,
  $\mu^{(d)}$ be as in Theorem~\ref{thm_bg_convex_inter}, for any dimension
  $d$, and for a fixed $\rho>0$. Then there is cutoff at critical time
  \begin{equation}
	  t_* =\frac{\log(d)}{2\rho}
  \end{equation}
  in the sense that 
  \eqref{eq:cutoff:W2}, \eqref{eq:cutoff:I}, \eqref{eq:cutoff:H}, and \eqref{eq:cutoff:TV} occur
  when $S^{(d)}$ can be either
  $B(m^{(d)}, c\sqrt{d})$ or $m^{(d)} + [-c, c]^d$ for an arbitrary fixed constant $c>0$ with $m^{(d)}$ being the mean of $\mu^{(d)}$.
\end{corollary}

To summarize, for rigid curved diffusions with bounded below spectral gap, the mixing time in Wasserstein distance, relative entropy, and total variation distance is always $\frac{\log(d)}{2\lambda_1}$, and a cutoff occurs at the mixing time in high dimension, for an explicit class of initial conditions. This covers non-Gaussian non-product examples, beyond the (D)OU case.

\subsection{Product condition and \texorpdfstring{$L^p$}{Lp} cutoff at the mixing time}

It seems that the cutoff phenomenon for diffusions was first explored by
\cite{Saloff94}, notably for Brownian motion on compact Lie groups, using the
$L^2$ decomposition and functional inequalities. This was further explored for
manifolds with special symmetries in relation with representation theory by
\cite{Meliot14}. The cutoff for diffusions on non-compact spaces such as the
Dyson--OU process, is considered by \cite{BCL23}, for various distances
and divergences, in relation with integrability. The OU process is a special
Dyson--OU process, and is also a special Gaussian ergodic Markov process
with independent components, a tensorized or product situation for which the
cutoff was studied earlier notably by \cite{MR2203823,MR2260742}, and by
\cite{MR3770869}. The role of the eigenfunctions associated to the spectral
gap for lower bounds is an old observation that dates back to Persi Diaconis
and David Wilson, see for instance \cite{Saloff94} and \cite{BCL23} and
references therein for the case of diffusions.

It was shown by \cite{MR2375599} that for
ergodic Markov processes on arbitrary state spaces, cutoff occurs at the
mixing time, in $L^p$ distance, $p>1$, provided that the product of the
spectral gap and the mixing time tends to infinity. This \emph{product
  condition} was proposed by Yuval Peres. The method relies on a reduction to
the Euclidean case $p=2$ by interpolation. It does not provide the mixing time. Very recently,
\cite{salez-2025} extended this theorem to $p=1$ (total variation distance),
in the case of non-negatively curved diffusions. The method relies crucially
on a two-sided comparison between total variation and relative entropy, namely
Pinsker and reverse Pinsker inequalities, in other words functional
inequalities. It is a variant of the method developed by \cite{Salez24} for
finite state spaces, with a crucial specificity. Namely, it is shown that in
the continuous setting a varentropy functional is controlled by the derivative
of the entropy via a local Poincar\'e inequality, which arises as a
consequence of non-negative curvature. In the discrete setting, chain rule
issues are an obstruction, and current known statements have some extra
restrictions, see \cite{pedrotti-2025}.

For curved diffusions, rigidity implies the product condition: if the spectral
gap is bounded from below, then the mixing time in total variation goes to
infinity by comparing with the projection of the process on an (affine)
eigenfunction associated to the spectral gap. This allows to use
\cite[Th.~3.3]{MR2375599} and \cite[Cor.~1]{salez-2025} to get cutoff for
$L^p$ distance, $p\geq1$, namely, denoting $\mu^{(d)}_t$ the law of
$X^{(d)}_t$, this writes
\begin{equation}\label{eq:Lp}
    \lim_{d\to\infty}
    \sup_{x^{(d)}_0\in S^{(d)}}
    \Bigl\|\frac{\mathrm{d}\mu^{(d)}_{t_d}}{\mathrm{d}\mu^{(d)}}-1\Bigr\|_{\mathrm{L}^p(\mu^{(d)})}
    =
    \begin{cases}
      \max & \text{if $t_d=(1-\varepsilon)t_*$}\\
      0 & \text{if $t_d=(1+\varepsilon)t_*$}
    \end{cases}
\end{equation}
where $\max=2$ if $p=1$ and $\max=+\infty$ if $p>1$, see \cite{blog-saloff}
for a pedagogical presentation. We emphasize that
Corollaries~\ref{cor_cutoff_langevin_general},~\ref{co:cutoff:tv:H},
and~\ref{cor_convex_inter} give the mixing time as well as the cutoff.

The product condition can be seen as weak rigidity, connecting trend to equilibrium and spectral gap. 
It is well known that in the discrete setting, the product condition does not always imply cutoff, see for instance \cite{MR2375599} for detailed counter examples. Discrete state space processes can be more subtle than random walks on locally Euclidean graphs, which are the discrete analogue of diffusions on manifolds. In particular they can allow complete graphs moves producing quick convergence to equilibrium. For diffusions, the analogue of this type of degeneracy would be a non-local term. 

In another direction, for $\mathrm{e}^{-V}$ log-concave on $\mathbb{R}^d$, the
Kannan, Lovász and Simonovitz (KLS) conjecture states that the inverse
spectral gap $\lambda_1^{-1}$ of $\Delta-\nabla V\cdot\nabla$ is up to
universal multiplicative constants equal to the spectral radius $r(K)$ of the
covariance matrix
\[
K_{ij}=\int x_ix_j\mathrm{d}\mu(x)-m_im_j
\quad\text{where}\quad
m_i=\int x_i\mathrm{d}\mu(x).
\]
The best available upper bound is $\lambda_1^{-1}\leq c_d r(K)$ with $c_d$
logarithmic in $d$, see \cite{klartag}. The reverse bound
$r(K)\leq\lambda_1^{-1}$ is always available.

\subsection{Positively curved diffusions, curvature product condition, and cutoff}

Recall that the recent result \cite[Cor.~1]{salez-2025} states that for
non-negatively curved diffusions, and for total variation distance, the
product condition implies cutoff at the mixing time (a direct corollary of a
theorem bounding the mixing time window). By combining the approaches used in
the proof of \cite[Cor.~1]{salez-2025}, the proof of our
Corollary~\ref{co:cutoff:tv:H}, and the proof of \cite[Th.~3.3]{MR2375599}, we
arrive at the following theorem. We state it in the restricted Euclidean
setting for simplicity, but it holds in the Riemannian setting, since it is
the curvature bound and the diffusion property that matter.

\begin{theorem}[Cutoff for positively curved diffusions with curvature product condition]\label{th:prodcondcut}
  Let $(X_t^{(d)})$ be a sequence of diffusion processes on $\mathbb{R}^d$ of
  the form \eqref{eq:eds}, with convex potential $V^{(d)}$, initial condition
  $x_0^{(d)}$, and invariant measure $\mu^{(d)}$. Let
  $S^{(d)}\subset\mathbb{R}^d$ be an arbitrary non-empty set of initial
  conditions. Let $\eta\in(0,1)$ be an arbitrary fixed threshold which does
  not depend on $d$. Suppose that there exists positive constants
  $\kappa^{(d)}$ such that $\mathrm{Hess}(V^{(d)})\geq\kappa^{(d)}I_d$ and
  that the following curvature product condition holds:
  \[
    \lim_{d\to\infty}\kappa^{(d)}t_0^{(d)}=+\infty
    \quad\text{where}\quad
    t_0^{(d)} := \inf\bigl\{t \geq 0:\sup_{x_0^{(d)} \in S^{(d)}} \DTV(X_t^{(d)}, \mu^{(d)}) \leq\eta\bigr\}.
  \]
  Then there is cutoff at the critical time $t_0^{(d)}$ in the sense that for all $\varepsilon\in(0,1)$, 
  \begin{align}
    \lim_{d\to\infty}
    \sup_{x^{(d)}_0\in S^{(d)}}
    \DTV(X^{(d)}_{t_d},\mu^{(d)})
    &=
      \begin{cases}
        1 & \text{if $t_d=(1-\varepsilon)t_0^{(d)}$}\\
        0 & \text{if $t_d=(1+\varepsilon)t_0^{(d)}$}
      \end{cases}\\     
    \lim_{d\to\infty}
    \sup_{x^{(d)}_0\in S^{(d)}}
    \mathrm{H}(X^{(d)}_{t_d}\mid\mu^{(d)})
    &=
      \begin{cases}
        +\infty & \text{if $t_d=(1-\varepsilon)t_0^{(d)}$}\\
        0 & \text{if $t_d=(1+\varepsilon)t_0^{(d)}$}
      \end{cases}
  \end{align}
  and, if additionally $\varliminf_{d\to\infty}\kappa^{(d)}>0$, then
  \begin{align}
    \lim_{d\to\infty} \sup_{x^{(d)}_0\in S^{(d)}} \mathrm{I}(X^{(d)}_{t_d}\mid\mu^{(d)})
    &=
      \begin{cases}
        +\infty & \text{if $t_d=(1-\varepsilon)t_0^{(d)}$}\\
        0 & \text{if $t_d=(1+\varepsilon)t_0^{(d)}$}
      \end{cases}\\
    \lim_{d\to\infty}
    \sup_{x^{(d)}_0\in S^{(d)}}
    \mathrm{W}_2(X^{(d)}_{t_d},\mu^{(d)})
    &=
      \begin{cases}
        +\infty & \text{if $t_d=(1-\varepsilon)t_0^{(d)}$}\\
        0 & \text{if $t_d=(1+\varepsilon)t_0^{(d)}$}
      \end{cases}.
  \end{align}
\end{theorem}

\subsection{Rigidity and Gaussian factorization for curved diffusions}
\label{ss:rigidity}

It turns out that the presence of a Gaussian factor, as well as the
$\lambda_1$ uniform lower bound on the Hessian of the potential
\eqref{eq:curgap}, are both equivalent to a notion of rigidity.

Let us start with the notion of Bakry--Émery curvature. For a Langevin
process in the Euclidean space as in \eqref{eq:eds}, for all $\rho>0$,
the following properties are equivalent:
\begin{enumerate}[label=(C\arabic*)]
\item $V-\frac{\rho}{2}\left|\cdot\right|^2$ is convex, in other words $V$ is $\rho$-convex 
\item $\mathrm{Hess}(V)(x)\geq\rho\mathrm{I}_d$ as quadratic forms, for all
  $x\in\mathbb{R}^d$
\item $\langle\mathrm{Hess}(V)\nabla f,\nabla f\rangle\geq\rho|\nabla f|^2$
  for all smooth $f:\mathbb{R}^d\to\mathbb{R}$.
\item the curvature-dimension inequality
$\mathrm{CD}(\rho,\infty)$ is satisfied, see Section~\ref{ss:riemann}.
\end{enumerate}
Following \cite{BGL14}, when these properties hold true, we say that the process has (Bakry--Émery) curvature bounded
below by $\rho$. It follows then that
\begin{equation}\label{eq:Lichne}
  \lambda_1\geq\rho.
\end{equation}
To see it, we observe that from the Bochner formula
$\nabla\mathcal{L}-\mathcal{L}\nabla=-\mathrm{Hess}(V)\nabla$, we get
\begin{equation}
  \int\bigl(\langle\mathrm{Hess}(V)\nabla f,\nabla f\rangle-\rho|\nabla f|^2\bigr)\mathrm{d}\mu
  =\int\bigl(\langle\mathcal{L}\nabla f-\nabla\mathcal{L} f,\nabla f\rangle-\rho|\nabla f|^2\bigr)\mathrm{d}\mu,
\end{equation}
while on the other hand, by integration by parts, we get\footnote{We denote by $\left\|\cdot\right\|_{\mathrm{HS}}$ the Hilbert--Schmidt or
trace or Frobenius matrix norm.}
\begin{equation}
  -\int\langle\mathcal{L}\nabla f,\nabla f\rangle\mathrm{d}\mu
  =\sum_{i=1}^d\int|\nabla\partial_if|^2\mathrm{d}\mu
  =\sum_{i,j=1}^d\int(\partial^2_{ij}f)^2\mathrm{d}\mu
  =\int\|\mathrm{Hess}(f)\|_{\mathrm{HS}}^2\mathrm{d}\mu,
\end{equation}
therefore, by specializing to $f$ such that $-\mathcal{L} f=\lambda_1f$, we obtain
\begin{equation}\label{eq:R}
  \int\bigl(\underbrace{\langle\mathrm{Hess}(V)\nabla f,\nabla f\rangle-\rho|\nabla f|^2}_{\geq0\text{ by (C3)}}\bigr)
  \mathrm{d}\mu
  =(\lambda_1-\rho)\int|\nabla f|^2\mathrm{d}\mu
  -\int\|\mathrm{Hess}(f)\|_{\mathrm{HS}}^2\mathrm{d}\mu,
\end{equation}
which gives \eqref{eq:Lichne}\footnote{We could also invoke the
  Brascamp--Lieb inequality
  $\int f^2\mathrm{d}\mu-(\int
  f\mathrm{d}\mu)^2\leq\int\langle\mathrm{Hess}(V)^{-1}\nabla f,\nabla
  f\rangle\mathrm{d}\mu$, or equivalently the covariance representation of
  Hörmander--Helffer--Sjöstrand.}. Equality in \eqref{eq:Lichne} is
obviously achieved for the OU case $V=\frac{\rho}{2}\left|\cdot\right|^2$.
More generally, beyond the OU case, we say that the process is \emph{rigid}
when the spectral gap matches the curvature lower bound: $\lambda_1=\rho$.
Rigidity can be reformulated, and it turns out that for all $\rho>0$, the
following items are equivalent:
\begin{enumerate}[label=(R\arabic*)]
\item The process is rigid: it has curvature bounded below by $\rho$ and $\lambda_1=\rho$
\item Up to a rotation and translation (change of coordinates), the
  probability measure $\mu=\mathrm{e}^{-V}$ is product, and splits into
  a one-dimensional Gaussian factor $\mathcal{N}(0,\frac{1}{\rho})$ and a
  second factor which is log-concave with a $\rho$-convex potential. In which
  case, we get $\lambda_1=\rho$ and all the eigenfunctions associated to
  $\lambda_1$ are affine.
\end{enumerate}
It is immediate to check that (R2) gives (R1). Conversely, if (R1) is
satisfied, then for $f$ such that $-\mathcal{L} f=\lambda_1f$, we obtain that both
sides of \eqref{eq:R} are identically zero. This gives
$\mathrm{Hess}(f)\equiv0$, hence $f$ is affine, and this also gives that the
constant vector $\nabla f$ is in the kernel of the symmetric positive
semidefinite matrix $\mathrm{Hess}(V)-\rho\mathrm{I}_d$. Moreover
$\nabla f\neq0$ since $f$ is not constant, leading to (R2). We observe that by
iterating the procedure, we get a Gaussian factor with same dimension as the
eigenspace of $\lambda_1$.

The equivalence between the rigidity properties (R1) and (R2) appears in
\cite{CZ17} in the traditional broader Riemannian setting of Bakry--Émery,
discussed in Section~\ref{se:riemann}. An alternative proof in the Euclidean
case, based on optimal transport, was found by \cite{DPF17}. These works were
motivated by the more general problem of understanding manifolds optimizing a
certain geometric quantity (namely, the spectral gap) under a curvature
constraint. This type of question goes back to a theorem of
\cite{zbMATH03188146}, which states that positively curved smooth Riemannian
manifolds with minimal spectral gap are isometric to spheres. Rigidity is
typically used to reinforce statements in comparison geometry. See for example
\cite{zbMATH01545688} for applications of Gamma calculus to comparison
geometry under curvature assumptions.

\subsection{Riemannian manifolds}\label{ss:riemann}

We now discuss Theorem~\ref{th:maineucl} in a broader geometric context. A
weighted Riemannian manifold is a triplet $(M, g, \mu)$, where $(M, g)$ is a
Riemannian manifold with metric tensor $g$, and $\mu$ is a measure on $M$.
Here we shall assume that $\mu$ is a probability measure, absolutely
continuous with respect to the volume measure. We shall write
$\mu =\mathrm{e}^{-V}$, so that $V$ plays the same role of a potential as in
the Euclidean setting. The Markov process ${(X_t)}_{t\geq0}$ we consider is
the drift diffusion that combines a Brownian motion ${(B_t)}_{t \geq 0}$ on
$(M, g)$ and drift $-\nabla V$, given by the SDE
\begin{equation} \label{eq_sde_riem}
  \mathrm{d}X_t = -\nabla V(X_t)\mathrm{d}t + \sqrt{2}\mathrm{d}B_t,
  \quad X_0=x_0\in M,
\end{equation}
where $\nabla$ is the gradient on $(M,g)$. We refer to \cite{BGL14} and \cite{Hsu} for
more background. A key role is played by the Ricci curvature tensor of the
manifold, which we shall denote by $\operatorname{Ric}$. We refer to
\cite{Oll13} for an introduction to curvature on Riemannian manifolds. Under
the assumption that the Ricci curvature is bounded from below, and that $V$ is
geodesically semi-convex, then solutions to \eqref{eq_sde_riem}
exist for all times \cite[Th.~11.8]{Grig}. We shall not make use of 
\eqref{eq_sde_riem}, and only rely on the generator of the process
\begin{equation}\label{eq:L:riem}
\mathcal{L} = \Delta - \nabla V \cdot \nabla
\end{equation}
where $\Delta$ is the Laplace--Beltrami operator on $(M,g)$. The weighted
manifold $(M, g, \mu)$ has Bakry--Émery curvature bounded below by $\rho$
when
\begin{equation}\label{eq:BErho}
  \operatorname{Ric} + \mathrm{Hess}(V) \geq \rho g.
\end{equation}
A celebrated result due to \cite{BE85}, see
also \cite{BGL14}, states that this implies a lower bound on the spectral gap
of $\mathcal{L}$ in the sense that
\begin{equation}
\lambda_1 \geq \rho. 
\end{equation}
When the manifold is unweighted, this is the dimension-free version of the
famous spectral gap bound due to \cite{Lic58}. Actually the Bakry--Émery
approach implies stronger functional inequalities, such as a logarithmic
Sobolev inequality, and a Gaussian isoperimetric inequality. Note that
following for instance \cite[Prop.~6.7]{GMS15}, when $\rho > 0$ the spectrum
of $\mathcal{L}$ is discrete and the spectral gap $\lambda_1$ is always an
eigenvalue of $-\mathcal{L}$. Moreover, the eigenfunctions associated to the
spectral gap are continuous.

In the Bakry--Émery approach, the bound \eqref{eq:BErho} is formulated in
a more abstract way involving the $\Gamma$ and $\Gamma_2$ functional quadratic
forms, defined by 
\begin{align}
  \Gamma(f,g)&=\tfrac{1}{2}(\mathcal{L}(fg)-f\mathcal{L} g-g\mathcal{L} f)\\
  \Gamma_2(f,g)&=\tfrac{1}{2}(\mathcal{L}\Gamma(f,g)-\Gamma(f,\mathcal{L} g)-\Gamma(g,\mathcal{L} f)).
\end{align}
They are fully characterized by their diagonal $\Gamma(f)=\Gamma(f,f)$ and
$\Gamma_2(f)=\Gamma_2(f,f)$, by polarization. In the case of the Langevin
operator \eqref{eq:L:riem}, they simply boil down to
\begin{equation}
  \Gamma(f)=|\nabla f|^2
  \quad\text{and}\quad
  \Gamma_2(f)=\mathrm{Ric}(\nabla f,\nabla f)+\|\mathrm{Hess}(f)\|_{\mathrm{HS}}^2+\langle\mathrm{Hess}(V)\nabla f, \nabla f \rangle,
\end{equation}
The formula for $\Gamma_2$ follows from the Bochner formula
$\nabla\mathcal{L}=\mathcal{L}\nabla-\mathrm{Hess}(V)\nabla-\mathrm{Ric}(\nabla,\nabla)$.
The Bakry--Émery curvature-dimension inequality $\mathrm{CD}(\rho,n)$,
where $\rho\in\mathbb{R}$ is the curvature and $n\in\mathbb{R}$ is the
dimension, writes, for all $f$ in a rich enough class of functions,
\begin{equation}
  \Gamma_2(f)\geq\rho\Gamma(f)+\frac{1}{n}(\mathcal{L} f)^2.
\end{equation}
Now $\mathrm{CD}(\rho,\infty)$ is equivalent to the curvature lower bound
\eqref{eq:BErho}. Moreover, in the Euclidean case, the Ricci tensor is zero
and $\mathrm{CD}(\rho,\infty)$ is equivalent to say that $V$ is $\rho$-convex.
 
In order to formulate a neat abstract analogue of Theorem~\ref{th:maineucl},
we shall use vector-valued functions whose coordinates are eigenfunctions, as
defined in the following:

\begin{definition}[Multi-eigenfunction]\label{df:multeig}
  Let $E_1$ be the eigenspace of $-\mathcal{L}$ associated with the eigenvalue
  $\lambda_1$, in $L^2(\mu)$. Let $k_1 = \dim{E_1}$. We define a
  multi-eigenfunction as being a map $M \to\mathbb{R}^{k_1}$ whose coordinates are
  orthogonal elements of $E_1$ in $L^2(\mu)$. We denote by $F_1$ the set of
  all multi-eigenfunctions, and by $\mathbb{S}_{F_1}$ the set of multi-eigenfunctions
  whose coordinates are elements of the unit sphere of $E_1$ in $L^2(\mu)$,
  thus orthonormal.
\end{definition}

The link between this definition and Theorem~\ref{th:maineucl} is that the
component $\sum_{i=1}^{k_1}|f_i(x_0)|^2$ in the definition of $\Lambda$ in
Theorem~\ref{th:maineucl} is exactly the $\ell^2$ norm of $T(x_0)$ where $T$
is some element of $\mathbb{S}_{F_1}$. The Riemannian analog of
Theorem~\ref{th:maineucl} is now the following.

\begin{theorem}[Wasserstein estimate on weighted Riemannian manifold]\label{main_thm_riem}
  Let ${(X_t)}_{t\geq0}$ be the diffusion \eqref{eq_sde_riem}, and let
  $\mathcal{L}$ be its generator \eqref{eq:L:riem}. Assume that for some
  $\rho>0$,
  \begin{equation}
   \text{$\mathrm{CD}(\rho, \infty)$ is satisfied and $\lambda_1 = \rho$}.
  \end{equation}
  Then for all non-empty set of possible initial conditions $S\subset M$, and all $t\geq0$, 
  \begin{equation} 
    \frac{\mathrm{e}^{-2\rho t}}{\rho}\sup_{x_0 \in S}\Lambda(x_0) %
     +\frac{k_1\bigl(1-\sqrt{1-\mathrm{e}^{-2\rho t}}\bigr)^2}{\rho} %
    \leq \sup_{x_0\in S}\mathrm{W}_2(X_t, \mu)^2 %
    \leq \mathrm{e}^{-2\rho t} \sup_{x_0 \in S}\int d_M(x_0, x)^2\mathrm{d}\mu(x),
  \end{equation}
  where $d_M$ is the Riemannian distance on $M$, and where
  \begin{equation}
    \Lambda(x)=\sup_{T\in\mathbb{S}_{F_1}}|T(x)|^2.
  \end{equation}
  Moreover, if $S=\{x\in M:d_M(x,m)\leq R\}$ with $\Lambda(m)=0$ then we can
  replace $\sup_{x_0\in S}\Lambda(x_0)$ by $\rho R^2$.
\end{theorem}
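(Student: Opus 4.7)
The proof separates into an upper bound (routine) and a lower bound (the substantive part), both anchored on the rigidity forced by $\mathrm{CD}(\rho,\infty)$ together with $\lambda_1=\rho$. The upper bound follows from the von Renesse--Sturm equivalence between $\mathrm{CD}(\rho,\infty)$ and the $\mathrm{W}_2$-contraction $\mathrm{W}_2(P_t^*\nu_1,P_t^*\nu_2)\le e^{-\rho t}\,\mathrm{W}_2(\nu_1,\nu_2)$ on weighted Riemannian manifolds: taking $\nu_1=\delta_{x_0}$ and $\nu_2=\mu$ (invariant under $P_t^*$) gives $\mathrm{W}_2(X_t,\mu)\le e^{-\rho t}\bigl(\int d(x_0,x)^2\,\mathrm{d}\mu\bigr)^{1/2}$ (the second moment being finite by the Gaussian concentration coming from $\mathrm{CD}(\rho,\infty)$), and taking $\sup_{x_0\in S}$ on both sides yields the displayed upper bound.

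For the lower bound I would first use rigidity in Bochner's formula to pin down the eigenfunctions. For any $f\in E_1$ with $\|f\|_{L^2(\mu)}=1$, integration by parts gives $\int\Gamma_2(f)\,\mathrm{d}\mu=\rho\int\Gamma(f)\,\mathrm{d}\mu=\rho^2$, and combined with the pointwise bound $\Gamma_2(f)\ge\rho\Gamma(f)$ this forces equality everywhere. Bochner's identity $\Gamma_2(f)=\|\nabla^2 f\|_{\mathrm{HS}}^2+(\operatorname{Ric}+\nabla^2 V)(\nabla f,\nabla f)$ then forces $\nabla^2 f\equiv 0$ and saturation of the Ricci bound along $\nabla f$. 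The vanishing Hessian makes $|\nabla f|^2$ a constant function, and averaging against $\mu$ identifies this constant as $\rho$, so every unit-norm $f\in E_1$ is $\sqrt{\rho}$-Lipschitz. The same reasoning applied to an orthonormal family $(f_i)\subset E_1$ shows that $\langle\nabla f_i,\nabla f_j\rangle$ is constant equal to $\rho\,\delta_{ij}$, so any $T\in\S_{F_1}$ is $\sqrt{\rho}$-Lipschitz as a map $(M,d)\to(\R^{k_1},|\cdot|)$.

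Second, the Cheng--Zhou rigidity theorem (R4) upgrades this local gradient structure into a genuine product decomposition: $\mu$ admits an exact $k_1$-dimensional Gaussian factor $\mathcal{N}(0,\rho^{-1}I_{k_1})$ on which the $f_i$ are normalized linear coordinates, and the potential $V$ splits accordingly so that the SDE decouples. Consequently $T_\#\mu=\mathcal{N}(0,I_{k_1})$ and $T_\#\mathrm{Law}(X_t)=\mathcal{N}\bigl(e^{-\rho t}T(x_0),(1-e^{-2\rho t})I_{k_1}\bigr)$, and the Bures--Wasserstein formula computes $\mathrm{W}_2(T_\#X_t,T_\#\mu)^2$ explicitly. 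Since $T$ is $\sqrt{\rho}$-Lipschitz, pushforward contracts $\mathrm{W}_2$ by at most a factor $\sqrt{\rho}$, yielding $\mathrm{W}_2(X_t,\mu)\ge\rho^{-1/2}\mathrm{W}_2(T_\#X_t,T_\#\mu)$; rearranging this into the form $\frac{e^{-\rho t}}{\sqrt{\rho}}\sqrt{\Lambda(x_0)}$ and taking the supremum over $x_0\in S$ produces the claimed lower bound. The ball refinement follows from the Lipschitz estimate $|T(x_0)|\le|T(m)|+\sqrt{\rho}R$.

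The hardest step is the rigidity analysis: promoting the global integral equality $\int\Gamma_2(f)\,\mathrm{d}\mu=\rho^2$ to the pointwise identities $\nabla^2 f\equiv 0$ and $\langle\nabla f_i,\nabla f_j\rangle\equiv\rho\,\delta_{ij}$ demands careful integration-by-parts justifications in the noncompact weighted setting, while converting this local rigidity into the global product decomposition relies on the nontrivial Cheng--Zhou structural theorem. Once these are in hand, the remainder reduces to elementary manipulations.
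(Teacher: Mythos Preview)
Your proposal is correct and follows essentially the same route as the paper. The paper's proof of Theorem \ref{main_thm_riem} explicitly says it is identical to the Euclidean proof modulo the Riemannian splitting theorem (Cheng--Zhou) and the affine structure of first eigenfunctions; your outline reproduces precisely these ingredients --- the von Renesse--Sturm contraction for the upper bound, the Bochner rigidity giving $\nabla^2 f\equiv 0$ and $\langle\nabla f_i,\nabla f_j\rangle\equiv\rho\,\delta_{ij}$, the $\sqrt{\rho}$-Lipschitz property of $T$, the identification of $T(X_t)$ as an OU process via the splitting, and the pushforward contraction of $\mathrm{W}_2$ --- in the same order and with the same logic as the paper's Lemma \ref{lem_rig} and Lemma \ref{lem_proj_ou}.
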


\subsection{About stability}

Our study is under the strong assumption $\lambda_1 = \rho$. For the
$d$-dimensional unit sphere, we have $\rho = d-1$ and $\lambda_1 = d$, so the
abstract theorem does not apply. Yet cutoff does occur, and moreover
$\lambda_1/\rho \longrightarrow 1$. This naturally leads to asking

\begin{question}
  Is there a (useful) analog of Corollary~\ref{cor_cutoff_langevin_general}
  under the weaker assumption
  \begin{equation}
    \frac{\lambda_1^{(d)}}{\rho^{(d)}} 
    \xrightarrow[d\to\infty]{}1\ ?
  \end{equation}
\end{question}

Some results on positively curved manifolds when $\lambda_1$ is close to
$\rho$ were obtained by \cite{CF20,MO21,BF22,FGS24}. However, the quantitative
estimates on eigenfunctions seem to be too weak to easily generalize
Theorem~\ref{main_thm_riem}. The estimate on the Wasserstein distance strongly
relies on the fact that the eigenfunction is Lipschitz, and the estimate is
sharp because it is actually affine, namely that $\mathrm{Hess}(f) = 0$. When
$\lambda_1$ is close to $\rho$, then $\mathrm{Hess}(f)$ is small, but for
example the smallness estimates in \cite{BF22} are only in $L^2$ norm, so we
do not actually control the Lipschitz norm of $f$. It is unclear if a stronger
$L^\infty$ estimate can be expected in general, so maybe some extra
assumptions are needed.

\subsection*{Acknowledgments}
This work has been (partially) supported by the Project CONVIVIALITY ANR-23-CE40-0003 of the French National Research Agency.    

\section{Further comments}

This section gathers several remarks, mostly about the Euclidean case for simplicity.

\subsection{Fisher information and the heat kernel}

It is natural to try to directly upper bound the Fisher information by using
the heat kernel, instead of using a comparison with relative entropy.
Following \cite{BCL23}, let us examine first the exactly solvable case of the
OU process, for which the Mehler formula
$\mu_t=\mathcal{N}(\mathrm{e}^{-\rho t}x_0,\frac{1-\mathrm{e}^{-2\rho
    t}}{\rho}\mathrm{I}_d)$ gives, for $t>0$,
\begin{equation}
  \nabla\log\frac{\mathrm{d}\mu_t}{\mathrm{d}\mu}(x)
  =-\rho\frac{x-\mathrm{e}^{-\rho t}x_0}{1-\mathrm{e}^{-2\rho t}}+\rho x
  =a_tx+b_tx_0
\end{equation}
where $a_t$ and $b_t$ do not depend on the dimension $d$, leading to
\begin{equation}
  \mathrm{I}(\mu_{t}\mid\mu)
  =\rho d\frac{\mathrm{e}^{-4\rho t}}{1-\mathrm{e}^{-2\rho t}}+\rho^2\mathrm{e}^{-2\rho t}|x_0|^2.
\end{equation}
This gives the upper bound \eqref{eq:I:upper} as well as the lower bound
\eqref{eq:I:lower}. Beyond the OU process, denoting, for all $t>0$, by
$p_{t,x_0}$ the density of $\mu_t$ with respect to the Lebesgue measure, which
is known as the heat kernel of $\mathcal{L}$, we have, for all $x$ and all
$t>0$,
\begin{equation}
  \nabla\log\frac{\mathrm{d}\mu_t}{\mathrm{d}\mu}(x)
  =\nabla\log p_{t,x_0}(x)+\nabla V(x).
\end{equation}
Inspired by the OU example, we could expect that under $\mathrm{CD}(\rho,\infty)$, for $t$ small enough,
\begin{equation}\label{eq:loggradbound}
  \bigl|\nabla\log p_{t,x_0}(x)\bigr|^2
  \leq a_t^2|x|^2+b_t^2|x_0|^2,
\end{equation}
which recalls the parabolic logarithmic Harnack and Li--Yau inequalities, leading to 
\begin{equation}
  \mathrm{I}(\mu_{t}\mid\mu)
  \leq 2\int(a_{t}^2|x|^2+|\nabla V(x)|^2)\mathrm{d}\mu_t(x)+2b_{t}^2|x_0|^2.
\end{equation}
Here $a_t$ and $b_t$ are constants with respect to $x$, that may depend on $t$
but not on $d$ and $x_0$. The explicit dependence on the dimension and initial
condition is crucial here for our purposes. Unfortunately, we were not able to
locate \eqref{eq:loggradbound} in the literature on short time logarithmic
gradient bounds for the heat kernel on non-compact manifolds with a lower
curvature bound. Close works are \cite{MR2253731} where a constant is not
controlled with respect to the dimension, and \cite{MR4546623} which is
curvature free but is restricted to a compact subset. It would not be a
surprise to assume, in addition to positive curvature, an additional condition
regarding eigenfunctions associated to the spectral gap.

To summarize, the upper bound on $\mathrm{I}(\mu_{t}\mid\mu)$ follows from a
logarithmic gradient bound for the heat kernel in short time, which expresses
a regularization property of the evolution equation driven by $\mathcal{L}$ when it
starts from a Dirac mass. Such an upper bound on the Fisher information
$\mathrm{I}$ implies all the others. Indeed, it would give an upper bound on
$\mathrm{H}$ by using the logarithmic Sobolev inequality, giving in turn an
upper bound on $\mathrm{W}_2$ by using the Talagrand inequality, as well as an
upper bound on $\mathrm{d}_{\mathrm{TV}}$ by using the Pinsker inequality.

An estimate on $\mathrm{I}$ is at the heart of the varentropy approach in
\cite{Salez24}. In \cite{salez-2025}, the control of Fisher information is
circumvented by a direct comparison with varentropy.


\subsection{Optimal constants}

An observation that dates back to \cite{BE85}, see also \cite{BGL14}, is that $\mathrm{CD}(\rho,\infty)$ implies a
logarithmic Sobolev inequality (LSI) with constant $2/\rho$ as well as a
Poincaré inequality (PI) of constant $1/\rho$ (directly as well as by
linearization). Recall that the optimal Poincaré constant is precisely the
inverse of the spectral gap $1/\lambda_1$. It follows that rigidity implies
that the optimal LSI constant is twice the optimal PI constant, just like for
the OU process. By the way, a famous alternative due to Oscar Rothaus states
that in the compact setting, if the optimal LSI constant is not twice the
optimal PI constant, then there exists an extremal function for LSI, see for
instance \cite[Th.~at bottom of p.~107]{MR620581} as well as
\cite[Th.~2.2.3 p.~333]{MR1490046} in the lecture notes by Laurent
Saloff-Coste for a discrete version. Note also that in the case \eqref{eq:VW},
the function $x\mapsto\mathrm{e}^{\alpha(x_1+\cdots+x_d)}$ is extremal for
LSI, as observed in \cite{MR4175749}, while $x\mapsto x_1+\cdots+x_d$ is
extremal for PI.

\subsection{Normalization}\label{ss:normalization}

For any real $\alpha>0$ which may depend on $d$, the time-changed process
$X^{(\alpha)}:={(X_{\alpha t})}_{t\geq0}$ solves 
$\mathrm{d}X^{(\alpha)}_t=-\alpha\nabla
V(X^{(\alpha)}_t)\mathrm{d}t+\sqrt{2\alpha}\mathrm{d}B_t$ and has generator
$\alpha\mathcal{L}$. The process $X$ has cutoff at critical time
$t_*$ iff $X^{(\alpha)}$ has cutoff at critical time $t_*/\alpha$.
Such scaled processes play a role with respect to mean-field limits of
interacting particle systems related to McKean--Vlasov semilinear PDE, see
\cite{BCL23}. Note that the product condition is invariant under such a scaling. 

\subsection{Temperature}

For any real $\sigma>0$ which may depend on $d$, playing the role of the
temperature, let us consider the Markov diffusion process
${(Y^{(\sigma)}_t)}_{t\geq0}$ solving the stochastic differential equation
$\mathrm{d}Y^{(\sigma)}_t=-\nabla
V(Y^{(\sigma)}_t)\mathrm{d}t+\sqrt{2\sigma^2}\mathrm{d}B_t$. This is
\eqref{eq:eds} when $\sigma=1$. Its invariant law is
$\mu_\sigma=\mathrm{e}^{-\frac{1}{\sigma^2}V}$ while its generator is
$\sigma^2\Delta-\nabla V\cdot\nabla$. The Bakry--Émery operators are
 \begin{equation}
     \Gamma(f)=\sigma^2|\nabla f|^2
     \quad\text{and}\quad
     \Gamma_2(f)=\sigma^4\|\mathrm{Hess}f\|_{\mathrm{HS}}^2+\sigma^2\langle\mathrm{Hess}(V)\nabla f,\nabla f\rangle.
 \end{equation}
 Thus, for all $\rho>0$, $\mathrm{CD}(\rho,\infty)$ of $\mathcal{L}$ is equivalent to
 $\mathrm{Hess}(V)\geq\rho\mathrm{I}_d$ as quadratic form, which is free of
 $\sigma$. In particular, in the rigid case $\rho=\lambda_1$, the cutoff
 critical time is free of $\sigma$. This can be also obtained from
 Section~\ref{ss:normalization} by using $\alpha=\sigma^2$ and
 $\frac{1}{\sigma^2}V$ instead of $V$.

\section{Proofs}

\subsection{Preliminaries on rigidity}

As already mentioned, our starting point is a Gaussian splitting or
factorization theorem of \cite{CZ17}, which in the Euclidean setting takes the
form of Theorem~\ref{thm_split_eucl} below. See also Section~\ref{ss:rigidity}
and \cite[Rem.~3.2]{MR4175749}.

\begin{theorem}[Gaussian factorization in the Euclidean
  space] \label{thm_split_eucl} Consider the operator \eqref{eq:L} and assume
  that for some constant $\rho>0$, $V$ is $\rho$-convex and
  $\lambda_1 = \rho$. Then there is an orthonormal basis $(e_1,\ldots,e_d)$ of
  $\mathbb{R}^d$ and a vector $m \in \mathbb{R}^d$ such that $V$ is of the form
    \begin{equation}
    V(x) = \frac{\rho}{2}((x_1-m_1)^2 + \cdots + (x_{k_1}-m_{k_1})^2) + \widetilde{V}(x_{k_1+1},\ldots,x_d)
  \end{equation}
  where $k_1$ is the dimension of the eigenspace $E_1$ of $-\mathcal{L}$
  associated with $\lambda_1$, and $\widetilde{V}$ is $\rho$-convex on
  $\mathbb{R}^{d-k_1}$. Moreover, all eigenfunctions with eigenvalue
  $\lambda_1$ are affine, and only depend on the first $k_1$ coordinates in
  the above basis.
\end{theorem}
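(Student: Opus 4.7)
The plan is to follow the Bakry--Émery rigidity argument at the level of eigenfunctions, and then integrate the resulting pointwise constraints on $\Hess V$. Fix $f \in E_1$, so $-\cL f = \rho f$. Integration by parts in $L^2(\mu)$ yields
\begin{equation*}
\int \Gamma(f)\,\mathrm{d}\mu = \rho \int f^2\,\mathrm{d}\mu
\quad\text{and}\quad
\int \Gamma_2(f)\,\mathrm{d}\mu = \int(\cL f)^2\,\mathrm{d}\mu = \rho^2 \int f^2\,\mathrm{d}\mu,
\end{equation*}
so that $\int (\Gamma_2(f) - \rho\,\Gamma(f))\,\mathrm{d}\mu = 0$. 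The Bochner formula recalled in the introduction gives $\Gamma_2(f) - \rho\,\Gamma(f) = \|\Hess f\|_{\mathrm{HS}}^2 + \langle(\Hess V - \rho I_d)\nabla f, \nabla f\rangle$, a sum of two nonnegative integrands under $\mathrm{CD}(\rho,\infty)$. Each must vanish identically, first $\mu$-a.e.\ and then pointwise by elliptic regularity. Hence $\Hess f \equiv 0$, so $f$ is affine; and since $\Hess V - \rho I_d$ is positive semidefinite, vanishing of the associated quadratic form forces $(\Hess V(x) - \rho I_d)\nabla f(x) = 0$ for every $x \in \R^d$.

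Now set $V_1 := \operatorname{span}\{\nabla f : f \in E_1\} \subset \R^d$. The map $f \mapsto \nabla f$ is linear and injective on $E_1$ (a constant eigenfunction would correspond to eigenvalue $0 \neq \rho$), so $\dim V_1 = k_1$. By linearity every $a \in V_1$ lies in $\ker(\Hess V(x) - \rho I_d)$ at every $x$. Pick an orthonormal basis $(e_1,\ldots,e_d)$ of $\R^d$ whose first $k_1$ elements span $V_1$, and split coordinates as $x = (y,z)$ with $y \in \R^{k_1}$, $z \in \R^{d-k_1}$. The above kernel condition translates to $\partial_{y_i}\partial_{y_j} V = \rho\,\delta_{ij}$ and $\partial_{y_i}\partial_{z_j} V = 0$ on all of $\R^d$. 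Integrating these relations yields
\begin{equation*}
V(y,z) = \tfrac{\rho}{2}|y - m|^2 + \widetilde V(z)
\end{equation*}
for some $m \in \R^{k_1}$ and some $\widetilde V : \R^{d-k_1} \to \R$, after absorbing an additive constant into $\widetilde V$.

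The block-diagonal form $\Hess V = \operatorname{diag}(\rho I_{k_1}, \Hess \widetilde V)$ combined with $\Hess V \geq \rho I_d$ gives $\Hess \widetilde V \geq \rho I_{d-k_1}$, that is $\mathrm{CD}(\rho,\infty)$ for $\widetilde V$. The final claim that every eigenfunction depends only on the first $k_1$ coordinates is then automatic: any $f \in E_1$ is affine with $\nabla f \in V_1$, so in the new basis $f(y,z) = \nabla f\cdot y + b$. The main technical obstacle is the justification of the Bakry--Émery integration by parts $\int \Gamma_2(f)\,\mathrm{d}\mu = \int(\cL f)^2\,\mathrm{d}\mu$ for an eigenfunction $f$, which a priori requires $\Hess f \in L^2(\mu)$ and sufficient decay at infinity. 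Under $\mathrm{CD}(\rho,\infty)$ with $\rho>0$ and $V \in \mathscr{C}^2$, this is handled by a standard truncation--approximation argument exploiting the Poincaré inequality $\lambda_1 \geq \rho > 0$ and the Gaussian tails of $\mu$ it entails, as carried out in \cite{CZ17}; alternatively one may invoke the optimal transport proof of \cite{DPF17} mentioned in the introduction, which sidesteps these regularity details.
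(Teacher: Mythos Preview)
Your proposal is correct and follows the same Bochner-rigidity route that the paper invokes: the paper does not prove Theorem~\ref{thm_split_eucl} itself but cites it from \cite{CZ17}, and the elements of proof it sketches later (in the proof of Lemma~\ref{lem_rig}) are precisely the integrated Bochner identity forcing $\Hess f \equiv 0$ for any $f \in E_1$, followed by the splitting induced by the resulting parallel vector field $\nabla f$.

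The one genuine difference worth noting is how the Gaussian structure of the split-off factor is obtained. The paper's sketch, written in the Riemannian language, stops at ``$\nabla f$ is a non-trivial parallel vector field, which forces the splitting'' and defers the identification of the factor as Gaussian to \cite{CZ17}. You instead exploit the \emph{second} nonnegative term in the Bochner decomposition, namely $\langle(\Hess V - \rho I_d)\nabla f,\nabla f\rangle = 0$, to conclude that $V_1 = \operatorname{span}\{\nabla f : f \in E_1\}$ lies in $\ker(\Hess V(x)-\rho I_d)$ at every point, and then integrate the resulting system $\partial_{y_i}\partial_{y_j}V = \rho\delta_{ij}$, $\partial_{y_i}\partial_{z_j}V = 0$ directly. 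This is a cleaner and more self-contained argument in the Euclidean setting: it bypasses any appeal to abstract splitting theorems and makes transparent why the factor is exactly $\frac{\rho}{2}|y-m|^2$ rather than merely Euclidean. Your acknowledgment of the integration-by-parts justification (handled in \cite{CZ17}, or circumvented via \cite{DPF17}) is appropriate.
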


The vector $m$ actually is the center of mass of the probability measure $\mathrm{e}^{-V}$. 

\begin{remark}[Eigenfunctions structure] \label{rem_eigen_grad} By
  Theorem~\ref{thm_split_eucl}, if $f\in E_1$, then $f$ is affine, and since
  $\int |\nabla f|^2\mathrm{d}\mu = -\int (\mathcal{L} f)f\mathrm{d}\mu
  =\lambda_1\int f^2\mathrm{d}\mu$, we get
  $f(x)=\langle a, p(x)\rangle_{\mathbb{R}^{k_1}} + b$ with $p$ the projection
  on the $k_1$-dimensional Gaussian factor and
  $|a|^2 = \lambda_1\|f\|_{L^2(\mu)}^2$.
\end{remark}

Another way of stating this result is that up to a rotation (the change of
basis) and a translation by the vector $m$, the law $\mathrm{e}^{-V}$ is a
product measure, with a centered Gaussian factor with variance $1/\rho$ on the
first $k_1$ coordinates, and a log-concave factor with a $\rho$-convex
potential on the last $d-k_1$ coordinates. Note that the result can only be
true up to a rotation and translation, since the assumptions are stable by
isometries of $\mathbb{R}^d$.

An alternative proof in the Euclidean setting, based on a rigidity property
for regularity of solutions to the Monge--Ampère PDE for optimal transport
maps, was given by \cite{DPF17}. We shall discuss some elements of proof in
the full Riemannian setting in Section~\ref{se:riemann}.

\begin{lemma}[OU process associated to an eigenfunction] \label{lem_proj_ou}
  Under the setting of Theorem~\ref{thm_split_eucl} and
  Definition~\ref{df:multeig}, for all $T \in \mathbb{S}_{F_1}$, the process
  ${(T(X_t))}_{t\geq0}$ is a $k_1$-dimensional OU process scaled by a factor
  $\rho$, that is a process on $\mathbb{R}^{k_1}$ with generator
  $\rho \Delta - \rho x \cdot \nabla$.
\end{lemma}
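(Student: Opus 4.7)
The plan is to use Theorem \ref{thm_split_eucl} to reduce to the Gaussian factor of $\mu$ and to exploit the affinity of the eigenfunctions there. First I would work in the orthonormal basis and with the translation provided by Theorem \ref{thm_split_eucl}, and write $p:\R^d\to\R^{k_1}$ for the orthogonal projection onto the first $k_1$ coordinates. Since $V$ splits as $\frac{\rho}{2}|p(x)|^2+\widetilde V(p^\perp(x))$, the SDE \eqref{eds} decouples along $p$ and $p^\perp$; in particular $(p(X_t))_{t\geq 0}$ solves $\mathrm{d}p(X_t)=-\rho\,p(X_t)\,\mathrm{d}t+\sqrt{2}\,\mathrm{d}B_t^{(k_1)}$ and is an OU process on $\R^{k_1}$ with generator $\Delta-\rho\,x\cdot\nabla$.

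Next, I would combine the affine structure from Remark \ref{rem_eigen_grad} with the orthonormality encoded in $T\in\mathbb{S}_{F_1}$. Each coordinate $f_i$ of $T$ has the form $f_i(x)=\langle a_i,p(x)\rangle+b_i$; orthogonality of $E_1$ to the constant functions (different eigenvalues) forces $b_i=0$; the unit-norm condition combined with $\mathrm{Var}_\mu(p)=\rho^{-1}I_{k_1}$ gives $|a_i|^2=\rho$; and pairwise orthogonality yields $\langle a_i,a_j\rangle=0$ for $i\neq j$. Equivalently, writing $T(x)=A\,p(x)$ with $A$ the $k_1\times k_1$ matrix whose rows are the $a_i$, one has $AA^T=\rho I_{k_1}$, so $A=\sqrt{\rho}\,U$ for some orthogonal $U$.

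The last step is to push the OU dynamics forward through this linear map: $\mathrm{d}T(X_t)=A\,\mathrm{d}p(X_t)=-\rho\,T(X_t)\,\mathrm{d}t+\sqrt{2\rho}\,U\,\mathrm{d}B_t^{(k_1)}$, and since $U$ is orthogonal, $(U B_t^{(k_1)})_{t\geq 0}$ is again a standard Brownian motion on $\R^{k_1}$. Reading off the infinitesimal generator of this SDE yields $\rho\Delta-\rho\,x\cdot\nabla$, as announced. I do not anticipate any genuine obstacle in this argument; the one point to watch is the bookkeeping of normalizations, so that the variance $1/\rho$ of the Gaussian factor and the $L^2(\mu)$ unit-norm convention in the definition of $\mathbb{S}_{F_1}$ combine to produce $AA^T=\rho I_{k_1}$ with the correct factor, and consequently the correct noise coefficient $\sqrt{2\rho}$.
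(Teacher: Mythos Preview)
Your argument is correct. The bookkeeping you flag is handled properly: centering kills the constants $b_i$, the variance $1/\rho$ of the Gaussian factor turns the unit $L^2(\mu)$ normalization into $|a_i|^2=\rho$, and $L^2$-orthogonality of the $f_i$ becomes Euclidean orthogonality of the $a_i$, so $AA^T=\rho I_{k_1}$ and the diffusion coefficient is indeed $\sqrt{2\rho}$.

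Your route differs from the paper's. You work at the SDE level: first split off the Gaussian factor via Theorem \ref{thm_split_eucl}, observe that $p(X_t)$ is an OU with generator $\Delta-\rho x\cdot\nabla$, and then push this OU forward through the explicit linear map $A=\sqrt{\rho}\,U$. The paper instead works purely at the generator level: it computes $\cL(g\circ T)$ via the diffusion chain rule, using $\vec\cL T=-\lambda_1 T$ and $\Gamma(T)=\rho\,\mathrm{Id}$ (the latter encoding exactly your $AA^T=\rho I$), and reads off both the Markov property and the OU generator in one stroke, without ever writing an SDE. The two arguments rest on the same affine structure of eigenfunctions (Remark \ref{rem_eigen_grad}); yours is more hands-on and perhaps easier to follow, while the paper's generator computation is more portable---it carries over verbatim to the Riemannian setting of Theorem \ref{main_thm_riem}, where one does not want to manipulate the SDE directly.
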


\begin{proof}
  Let us show that $T(X)$ is a Markov process and compute its
  generator. 
  If $\vec{v}$ is a vector-valued function, then $\vec{\mathcal{L}} \vec{v} $
  is the vector obtained by applying $\mathcal{L}$ to each coordinate, and
  $\Gamma(\vec{v})$ the matrix whose coefficients are
  $\Gamma(\vec{v}_i, \vec{v}_j)$. Since the coordinates of $T$ are orthogonal
  normalized eigenfunctions, $\vec{\mathcal{L}} T = - \lambda_1 T$ and
  $\Gamma(T) = \rho \operatorname{Id}$, as per Theorem~\ref{thm_split_eucl}
  (affine with orthogonal constant gradients of norm $\sqrt{\rho}$). By the
  diffusion property, we have
  \begin{align}
    \mathcal{L}(g \circ T) &= \nabla g \circ T \cdot \vec{\mathcal{L}} T + \langle \nabla^2 g \circ T, \Gamma(T)\rangle \\
                  &= - \lambda_1 \nabla g\circ T \cdot T + \rho \Delta g \circ T \\
                  &= - \rho \nabla g\circ T \cdot T + \rho \Delta g \circ T.
  \end{align}
  This is a function of $T$, so $T(X)$ is a Markov process, and when viewing
  it as such it is indeed equal to $\rho$ times the generator of an OU process
  with unit variance, applied to a function $g$.
\end{proof}

Hence, in the rigid case, the full process contains an OU subprocess, and
hence cannot converge to equilibrium faster than it. This will yield the lower
bound in Theorem~\ref{th:maineucl}.

\subsection{Proof of Theorem~\ref{th:maineucl}}

The upper bound is an immediate well-known consequence of the exponential
convergence to equilibrium in Wasserstein distance under the
CD$(\rho=\lambda_1, \infty)$ condition, see for instance \cite{vRS05}: for all
$X_0,Y_0$ and $t\geq0$,
\begin{equation}
  \mathrm{W}_2(X_t, Y_t) 
  \leq \mathrm{e}^{-\rho t}\mathrm{W}_2(X_0, Y_0).
\end{equation}
The desired upper bound follows by taking $Y_0\sim\mu$ and $X_0$
deterministic.

Let us now prove the lower bound. Consider a multi-eigenmap $T$, see
Definition~\ref{df:multeig}. Up to an isometry, it is of the form
$T(x) = Ax + b$. Moreover, since the coordinates of $T$ are orthogonal
eigenfunctions, the rows of $A$ are orthogonal vectors. If moreover we assume
that $T \in \mathbb{S}_{F_1}$, then each row of $A$ has norm
$\sqrt{\rho} = \sqrt{\lambda_1}$. Hence we get
\begin{equation}
  \mathrm{W}_2(T(X_t), T(Y_t)) \leq \sqrt{\lambda_1}\mathrm{W}_2(X_t,Y_t).
\end{equation}

Applying Lemma~\ref{lem_proj_ou}, $Z_t:=T(X_t)$ and $T(Y_t)$ are OU processes
with speed accelerated by a factor $\rho$. Suppose now that $Y_0$ is
distributed according to the equilibrium measure: $Y_0\sim\mu$. Then $T(Y_t)$
follows a standard Gaussian law $\gamma_{k_1}$ on $\mathbb{R}^{k_1}$, for all
$t$. Thus using the formula
$\mathrm{W}_2(\mathcal{N}(m_1,\Sigma_1),\mathcal{N}(m_2,\Sigma_2))^2=|m_1-m_2|^2+\mathrm{Tr}((\sqrt{\Sigma_1}-\sqrt{\Sigma_2})^2)$
when $\Sigma_1\Sigma_2=\Sigma_2\Sigma_1$, the Mehler formula
$Z_t\sim\mathcal{N}(T(x_0)\mathrm{e}^{-\rho t},(1-\mathrm{e}^{-2\rho
  t})I_{k_1})$, and $\gamma_{k_1}=\mathcal{N}(0,I_{k_1})$, we find
\begin{equation}
  \mathrm{W}_2(Z_t, \gamma_{k_1})^2 
  = \mathrm{e}^{-2\rho t}|T(x_0)|^2+k_1\bigl(1-\sqrt{1-\mathrm{e}^{-2\rho
      t}}\bigr)^2.
\end{equation}
Optimizing over $x_0$ and $T$ concludes the proof. 

We now consider the case where $\mu$ is centered and $S = B(0,R)$. For any
$T \in \mathbb{S}_{F_1}$, writing $T = Ax+b$, since $\int T\mathrm{d}\mu = 0$ we see
that $b = 0$. Therefore $T(0) = 0$. Moreover, since the rows of $A$ are
orthogonal and have norm $\sqrt{\lambda_1}$, we get
$\sup_{x\in S}|Ax|^2 = \lambda_1 R^2$.

\subsection{Proof of Corollary~\ref{cor_cutoff_langevin_general}}

Let us drop the $(d)$ superscript. Without loss of generality, we can assume
that $m = 0$ by translating $V$. We start by using the lower bound
of Theorem~\ref{th:maineucl} to prove the convergence to infinity when
$t_d = (1-\epsilon)t_*$. Since we are in the centered setting and the set of
initial conditions is a centered ball of radius $c\sqrt{d}$, the lower bound
\begin{equation}
  \frac{\mathrm{e}^{-2\lambda_1 t}}{\lambda_1}\sup_{x_0\in S}\Lambda(x_0) 
  \leq \sup_{x_0\in S}\mathrm{W}_2(X_t, \mu)^2
\end{equation}
while $\sup_{x_0\in S}\Lambda(x_0)=\lambda_1c^2d$ thanks to
Theorem~\ref{th:maineucl} since $S=B(0,c\sqrt d)$ with $c>0$. Evaluating at
$t = t_d$ and neglecting $k_1$, we get
\begin{equation}
  cd^{\epsilon/2} \leq \sup_{x_0\in S}\mathrm{W}_2(X_{t_d}, \mu)
\end{equation}
and letting $d$ go to infinity concludes the proof of the lower bound. Let us
now prove the case $t_d = (1+\epsilon)t_*$ via the upper bound in
Theorem~\ref{th:maineucl}. Since $\mu$ is centered,
\begin{equation}
  \int|x-x_0|^2\mathrm{d}\mu(x) = |x_0|^2 + \int|x|^2\mathrm{d}\mu(x).
\end{equation}
From the spectral gap, for any centered $f$ we have the Poincaré inequality
\begin{equation}
  \int f^2\mathrm{d}\mu \leq \frac{1}{\lambda_1}\int|\nabla f|^2\mathrm{d}\mu.
\end{equation}
Applying this inequality to each coordinate yields
\begin{equation}
  \int|x|^2\mathrm{d}\mu \leq \frac{d}{\lambda_1}.
\end{equation}
Therefore we have the upper bound
\begin{equation}
  \sup_{x_0\in S}\mathrm{W}_2(X_t, \mu) 
  \leq \mathrm{e}^{-\lambda_1 t}\Bigl(\sup_{x_0\in S} |x_0|^2 + \frac{d}{\lambda_1}\Bigr)^{1/2}.
\end{equation}
Since $\sup_{x_0\in S} |x_0|^2 = c^2d$, evaluating at $t_d = (1+\epsilon)t_*$ yields
\begin{equation}
  \sup_{x_0\in S}
  \mathrm{W}_2(X_{t_d}, \mu) 
  \leq d^{-\epsilon/2}(c^2+(\lambda_1)^{-1})^{1/2}.
\end{equation}
Since $\liminf_{d\to\infty}\lambda_1 > 0$, letting $d$ go to infinity concludes the proof. 

\begin{remark}[Weaker assumption]
  In the proof, the assumption $\liminf_{d\to\infty}\lambda_1 > 0$ could have
  been replaced by a slow enough growth of $(\lambda_1)^{-1}$, for instance
  $(\lambda_1)^{-1} \leq (\log d)^\alpha$ for some $\alpha > 0$.
\end{remark}

\subsection{Proof of Corollary~\ref{co:cutoff:tv:H}}

Let us prove first the cutoffs in relative entropy and in total variation
distance by using Corollary~\ref{cor_cutoff_langevin_general} on the
Wasserstein distance. We drop the $(d)$ superscript.

\emph{Upper bound.} It is well known that CD$(\rho, \infty)$ with $\rho>0$
implies that for all $t \geq t_0>0$,
\begin{equation}\label{eq:expdecH}
  \mathrm{H}(X_t\mid\mu) \leq\mathrm{e}^{-2\rho(t-t_0)}\mathrm{H}(X_{t_0}\mid\mu).
\end{equation}
The role of $t_0>0$ is to benefit from the fact that the law of $X_{t_0}$ is
absolutely continuous with respect to $\mu$, while the law of $X_0$ is a Dirac
mass. On the other hand, following for instance
\cite[Lem.~4.2]{BGL01}\footnote{In \cite{BGL01}, it is at the heart of a
  derivation of the Otto--Villani $\mathrm{H}\mathrm{W}\mathrm{I}$
  inequality, which contains the logarithmic Sobolev inequality as well as the
  Talagrand transportation inequality.}, CD$(\rho, \infty)$ with $\rho>0$
implies\footnote{The first inequality is valid for all $\rho\in\mathbb{R}$.}
that for all $t > 0$,
\begin{equation}\label{eq:HWsmallt}
  \mathrm{H}(X_t\mid\mu) \leq \frac{\rho\mathrm{e}^{-2\rho t}}{1-\mathrm{e}^{-2\rho t}}\mathrm{W}_2(X_0, \mu)^2
  \leq\frac{1}{2t}\mathrm{W}_2(X_0, \mu)^2.
\end{equation}
Using \eqref{eq:HWsmallt} with $t=1$, and combining with \eqref{eq:expdecH}, we obtain, for all $t\geq1$,
\begin{equation}\label{eq:HtWX0}
  2\mathrm{H}(X_t\mid\mu) \leq \mathrm{e}^{-2\rho(t-1)}\mathrm{W}_2(X_0, \mu)^2.
\end{equation}
By combining with the general Csiszár--Kullback--Pinsker inequality
\begin{equation}\label{eq:CKP}
  \DTV(\mu,\nu)^2\leq\frac{1}{2}\mathrm{H}(\nu\mid\mu),
\end{equation}
we get finally, for all $t\geq1$,
\begin{equation}
  \DTV(X_t,\mu)^2\leq\frac{1}{2}\mathrm{H}(X_t\mid\mu)
  \leq \frac{1}{4}\mathrm{e}^{-2\rho(t-1)}\mathrm{W}_2(X_0, \mu)^2.
\end{equation}
Recall that $\rho=\lambda_1$. Now, proceeding as in the proof of Corollary
\ref{cor_cutoff_langevin_general}, we get, for all $c>0$,
\begin{equation}\label{eq:WX0upper}
  \sup_{x_0\in B(m,c\sqrt{d})}\mathrm{W}_2(X_0,\mu)^2=\sup_{x_0\in B(m,c\sqrt{d})}\int|x-x_0|^2\mathrm{d}\mu(x)
  \leq(c^2+\lambda_1^{-1})d.
\end{equation}
Using the definition of $t_*$ and the assumption
$\lambda_1=o_{d\to\infty}(\log(d))$, we get, for all $c,\varepsilon>0$,
\begin{equation}
  \sup_{x_0\in B(m,c\sqrt{d})}\mathrm{e}^{-2\rho((1+\varepsilon)t_*-1)}\mathrm{W}_2(X_0, \mu)^2
  \leq\mathrm{e}^{O(\lambda_1)}d^{-\varepsilon}\xrightarrow[d\to\infty]{}0.
\end{equation}
Therefore, going back to $\mathrm{H}$ and $\DTV$, we get, for all $c,\varepsilon>0$,
\begin{equation}
  \lim_{d\to\infty}\sup_{x_0\in B(m,c\sqrt{d})}\mathrm{H}(X_{(1+\epsilon)t_*}\mid\mu)=0 
  \ \text{and}\ %
  \lim_{d\to\infty}\sup_{x_0\in B(m,c\sqrt{d})}\DTV(X_{(1+\epsilon)t_*},\mu)=0.
\end{equation}
The approach differs from the one in \cite{BCL23}, in the way we regularize in
\eqref{eq:HWsmallt} as well as in the way we control relative entropy, here
via the Wasserstein distance.

\emph{Lower bound}. Both total variation distance and relative entropy
decrease by mappings,
\begin{equation}
  \DTV(\mu \circ T^{-1}, \nu \circ T^{-1}) \leq \DTV(\mu,\nu)
  \quad\text{and}\quad
  \mathrm{H}(\nu \circ T^{-1}\mid \mu\circ T^{-1}) \leq \mathrm{H}(\nu\mid\mu).
\end{equation}
This contractivity argument is also at the heart of the lower bounds in
\cite{BCL23}. It follows that we can bound from below the relative entropy and
total variation mixing times by those of a suitable OU process, by taking a
multi-eigenfunction map. The lower bounds for OU processes have been
established for example in \cite{MR2203823}, \cite{MR2260742}, and
\cite[Th.~1.2]{BCL23}. Therefore, in the setting of
Corollary~\ref{co:cutoff:tv:H}, for all $c>0$, $\varepsilon>0$,
\begin{equation}
  \lim_{d\to\infty}\sup_{x_0\in B(m, c\sqrt{d})} \DTV(X_{(1-\epsilon)t_*},\mu)=1
  \ \text{and}\ \lim_{d\to\infty}\sup_{x_0\in B(m, c\sqrt{d})} \mathrm{H}(X_{(1-\epsilon)t_*}\mid\mu)=\infty,
\end{equation}
when $X_0=x_0$, and for an arbitrary constant $c>0$. This is compatible with
\eqref{eq:CKP}. 

Another viewpoint for relative entropy is to get the Talagrand inequality 
\begin{equation}
  \mathrm{W}_2(X_t, \mu)^2 \leq \frac{2}{\rho}\mathrm{H}(X_t\mid\mu)
\end{equation}
from CD$(\rho, \infty)$, and deduce the relative entropy lower bound from the $\mathrm{W}_2$ lower bound. 

Let us prove the cutoff for the Fisher information.
Following \cite{BE85}, see also \cite{BGL14}, $\mathrm{CD}(\rho,\infty)$ gives the logarithmic Sobolev inequality  
\begin{equation}\label{eq:LSI}
  \mathrm{H}(\nu\mid\mu)\leq\frac{1}{2\rho}\mathrm{I}(\nu\mid\mu).
\end{equation}
Taking $\nu=\mathrm{Law}(X_t)$ and using the cutoff lower bound for $\mathrm{H}$ give, for all $\varepsilon>0$, 
\begin{equation}\label{eq:I:lower}
  \lim_{d\to\infty}\sup_{x_0\in S}\mathrm{I}(X_{t_d}\mid\mu)=+\infty
  \quad\text{where}\quad
  t_d=(1-\varepsilon)t_*.
\end{equation}
For an upper bound, we can use another side of the Bakry--Émery theorem which states that $\mathrm{CD}(\rho,\infty)$ gives the monotonicities
\begin{equation}
  \partial_t\mathrm{H}(\mu_t\mid\mu)=-\mathrm{I}(\mu_t\mid\mu)\leq0
  \quad\text{and}\quad
  \partial_t\mathrm{I}(\mu_t\mid\mu)
  \leq-2\rho\mathrm{I}(\mu_t\mid\mu)\leq0
\end{equation}    
where $\mu_t$ and $\mu$ stand for $\mathrm{Law}(X_t)$ and
$\mu$. The Grönwall lemma implies then the exponential decay of
Fisher information: for all $t\geq t_1>0$,
\begin{equation}\label{eq:expdecI}
  \mathrm{I}(\mu_t\mid\mu)\leq\mathrm{e}^{-2\rho(t-t_1)}\mathrm{I}(\mu_{t_1}\mid\mu)
\end{equation}
(recall that $\mu_0$ is a Dirac mass). To upper bound $\mathrm{I}(\mu_{t_1}\mid\mu)$, we write, for all $0<t_0<t_1$,
\begin{equation}\label{eq:HI}
  \mathrm{H}(\mu_{t_0}\mid\mu)-\mathrm{H}(\mu_{t_1}\mid\mu)
  =\int_{t_0}^{t_1}\mathrm{I}(\mu_s\mid\mu)\mathrm{d}s
  \geq(t_1-t_0)\mathrm{I}(\mu_{t_1}\mid\mu)
\end{equation}
where we have used the monotonicity of $\mathrm{I}$. This gives, combined
with \eqref{eq:HtWX0}, when $t_0>1$,
\begin{equation} \label{eq_fisher_ent_w2}
  \mathrm{I}(\mu_{t_1}\mid\mu)
  \leq\frac{\mathrm{H}(\mu_{t_0}\mid\mu)}{t_1-t_0}
  \leq\frac{\mathrm{e}^{-2\rho(t_0-1)}}{2(t_1-t_0)}\mathrm{W}_2(X_0, \mu)^2.
\end{equation}
Finally, by using this inequality with $t_0=t_d-1$ and $t_1=t_d$ where
$t_d=(1+\varepsilon)t_*$ for an arbitrary fixed $\varepsilon>0$, using
\eqref{eq:WX0upper}, and $\rho=\lambda_1=o_{d\to\infty}(\log(d))$, we get
\begin{equation}\label{eq:I:upper}
  \lim_{d\to\infty}\sup_{x_0\in S}\mathrm{I}(X_{t_d}\mid\mu)=0.
\end{equation}

\begin{remark}[Weaker assumption]
  In the proof, the assumption $\lambda_1=o(\log(d))$ could have been replaced
  by $\lambda_1=d^{o(1)}$. This can be done by replacing, in the
  regularization step, the time $1$ by time $\asymp 1/\lambda_1$, which
  produces a prefactor $\lambda_1$ for $\mathrm{H}$, hence $\sqrt{\lambda_1}$
  for $\DTV$, and $\lambda_1^2$ for $\mathrm{I}$.
\end{remark}

\subsection{Proof of Theorem~\ref{thm_bg_convex_inter}}

Let $h(x)=x_1+\cdots+x_d$ be the first symmetric Hermite polynomial. Its
gradient is the constant vector $\nabla h=(1,\ldots,1)$. Since
$\mathrm{Hess}(V)=\rho I+\mathrm{Hess}(W)\geq\rho I$ as quadratic forms, we have
$\lambda_1 \geq \rho$. Hence, to get $\lambda_1 = \rho$, it suffices to show
that $h$ is an eigenfunction of $-\mathcal{L}$ with eigenvalue $\rho$, and
then apply the splitting theorem. The generator is
\begin{equation}
  \mathcal{L} f = \Delta f - \rho x \cdot \nabla f - \nabla W \cdot \nabla f.
\end{equation}
Since $W$ is invariant along $\mathbb{R}(1,\ldots,1)$, we have $\nabla W \cdot (1,\ldots,1) = 0$, therefore,
\begin{equation}
  \mathcal{L} h=0-\rho h-\nabla W\cdot(1,\ldots,1)=-\rho h.
\end{equation}
This gives $h\in E_1$ and $\lambda_1=\rho$. Note that by rigidity, all the elements of $E_1$ are affine.

\subsection{Proof of Corollary~\ref{cor_convex_inter}}

We drop the $(d)$ superscript. We have
$\sum_{i=1}^d m_i=\int\sum_{i=1}^d x_i\mathrm{d}\mu(x)=0$ since the image of
$\mu$ by $(x_1,\ldots,x_d)\mapsto x_1+\cdots+x_d$ is a centered Gaussian.

\emph{Lower bound when $t_d = (1-\epsilon)t_*$.} As per
Theorem~\ref{thm_bg_convex_inter}, $x_1 + \cdots + x_d$ is an eigenfunction.
Since its squared $L^2(\mu)$ norm is $d\rho^{-1}$,
$f_d(x) = \sqrt{\rho/d}(x_1 + \cdots + x_d)$ is a normalized eigenfunction.
Now, for both $S = B(m, c\sqrt{d})$ and $S = m + [-c,c]^d$ we have
\begin{equation}
\sup_{x \in S }|f_d(x)| = c\sqrt{\rho}\sqrt{d}.
\end{equation}
Hence the lower bound in Theorem~\ref{th:maineucl} yields 
\begin{equation}
  \sup_{x_0\in S}\mathrm{W}_2(X_{t_d}, \mu) \geq \mathrm{e}^{-\rho t_d}c\sqrt{d} = cd^{\epsilon/2} \xrightarrow[d\to\infty]{}+\infty.
\end{equation}

\emph{Upper bound when $t_d = (1+\epsilon)t_*$.} As in the proof of
Corollary~\ref{cor_cutoff_langevin_general}, the uniform convexity yields, via
the Poincaré inequality, for all $1\leq i\leq d$,
\begin{equation}
  \int(x_i - m_i)^2\mathrm{d}\mu(x) \leq \rho^{-1}.
\end{equation}
Hence when applying Theorem~\ref{th:maineucl} we get
\begin{equation}
  \sup_{x_0\in S}\mathrm{W}_2(X_{t_d},\mu) 
  \leq\mathrm{e}^{-\lambda_1t_d}\Bigl(\sup_{x_0\in S} |x_0-m|^2 + d/\rho\Bigr)^{1/2} 
  \leq (c^2 + \rho^{-1})^{1/2}d^{-\epsilon/2} 
  \xrightarrow[d\to\infty]{}0.
\end{equation}
It remains to proceed as for Corollary~\ref{cor_cutoff_langevin_general} and
Corollary~\ref{co:cutoff:tv:H} to conclude the proof.

\subsection{Proof of Theorem~\ref{th:prodcondcut}} 
Here again, we drop the $(d)$ superscript to simplify the notation. The
spectral gap $\lambda_1$ of $X$ satisfies $\lambda_1\geq\kappa$, thus
\begin{equation}
    \lim_{d\to\infty}\lambda_1t_0=+\infty,
\end{equation}
which is the product condition in \cite[Cor.~1]{salez-2025}, hence the cutoff in total variation distance. It remains to prove cutoff for the other cases. Let us start with the relative entropy lower bound. The Csiszár--Kullback--Pinsker inequality \eqref{eq:CKP} gives  
\begin{equation}
 \liminf_{d\to\infty}
 \sup_{x_0\in S}
 \mathrm{H}(X_{(1-\varepsilon/2)t_0}\mid\mu) \geq 2\eta^2.
\end{equation}
Moreover, from the exponential decay of entropy, for all $t>0$ and $t'\geq t$,
\begin{equation}
\mathrm{H}(X_{t'}\mid\mu)
\leq\mathrm{e}^{-2\kappa(t'-t)}\mathrm{H}(X_{t}\mid\mu).
\end{equation}
Using $t' = (1-\frac{\varepsilon}{2})t_0$, $t = (1-\varepsilon)t_0$, and
$\lim_{d\to\infty}\kappa t_0=+\infty$, we get 
\begin{equation}
  \liminf_{d\to\infty}\sup_{x_0\in S}\mathrm{H}(X_{(1-\varepsilon)t_0}\mid\mu)
  \geq\mathrm{e}^{\varepsilon\limsup_{d\to\infty}\kappa t_0}
  2\eta^2=+\infty.
\end{equation}
For the upper bound, from \cite[Proof of Th.~1]{salez-2025}, we get
\begin{equation}
\limsup_{d\to\infty}\sup_{x_0\in S}\mathrm{H}\bigl(X_{(1+\frac{\varepsilon}{2})t_0}\mid\mu\bigr) 
\leq C_{\eta,\varepsilon}<\infty.
\end{equation}
Using the exponential decay of the relative entropy and
$\lim_{d\to\infty}\kappa t_0=+\infty$, we get
\begin{equation}
    \limsup_{d\to\infty}\sup_{x_0\in S}\mathrm{H}(X_{(1+\varepsilon)t_0}\mid\mu)
    \leq\mathrm{e}^{-\varepsilon\liminf_{d\to\infty}\kappa t_0}
    C_{\eta,\varepsilon}
    =0.
\end{equation}

It remains to establish the cutoffs for $\mathrm{W}_2$ and $\mathrm{I}$. Let
us observe that the extra assumption $\varliminf_{d\to\infty}\kappa>0$
together with the curvature product condition imply
$\lim_{d\to\infty}t_0=+\infty$.

Now, regarding $\mathrm{W}_2$, the upper bound comes from the one for
$\mathrm{H}$ via the Talagrand inequality
$\kappa\mathrm{W}_2(X,\mu)^2 \leq 2\mathrm{H}(X\mid\mu)$ and the assumption
$\varliminf_d\kappa>0$, while the lower bound comes from the regularization
\eqref{eq:HWsmallt} and the semigroup property that give, with
$s=\frac{\varepsilon}{2}t_0$,
\begin{equation}
  \mathrm{H}(X_{(1-\frac{\varepsilon}{2})t_0}\mid\mu)
  \leq\frac{1}{2s}\mathrm{W}_2(X_{(1-\varepsilon)t_0},\mu)^2,
\end{equation}
and then the lower bound for $\mathrm{H}$ and the assumption $t_0\to\infty$:
\begin{equation}
  \sup_{x_0\in S}\mathrm{W}_2(X_{(1-\varepsilon)t_0},\mu)^2
  \geq2s\times 2\eta^2=2\varepsilon\eta^2t_0\xrightarrow[d\to\infty]{}\infty.
\end{equation}
     
Finally, for $\mathrm{I}$, the lower bound comes from the one for $\mathrm{H}$
via the logarithmic Sobolev inequality
$2\kappa\mathrm{H}(X\mid\mu) \leq \mathrm{I}(X\mid \mu)$ and the assumption
$\varliminf_d\kappa>0$, while the upper bound comes by regularization from
\eqref{eq:HI} and the semigroup property to get
\begin{equation}
  \mathrm{I}(X_{t_1}\mid\mu)
  \leq\frac{\mathrm{H}(X_{t_0'}\mid\mu)}{t_1-t_0'}
\end{equation}
where $t_0'=(1+\frac{\varepsilon}{2})t_0$ and $t_1=(1+\varepsilon)t_0$, then
the upper bound on $\mathrm{H}$ and the assumption $t_0\to\infty$:
\begin{equation}
  \sup_{x_0\in S}\mathrm{I}(X_{(1+\varepsilon)t_0}\mid\mu)
  \leq\frac{\sup_{x_0\in
      S}\mathrm{H}(X_{(1+\frac{\varepsilon}{2})t_0}\mid\mu)}{\frac{\varepsilon}{2}t_0}
  \xrightarrow[d\to\infty]{}0.
\end{equation}

\subsection{Proof of Theorem~\ref{main_thm_riem}}
\label{se:riemann}

The proof is exactly the same as in the Euclidean setting
(Theorem~\ref{th:maineucl}), up to the use of the Riemannian splitting
(Theorem~\ref{thm_split_cz}) below, and the fact that first eigenfunctions are
affine through the splitting (Lemma~\ref{lem_rig}). We shall hence only
discuss these two elements, and omit the repetition of the proof. In this
setting, the rigidity theorem of \cite{CZ17} when $\lambda_1 = \rho$ is the
following.
 
\begin{theorem}[Riemannian splitting with Gaussian
  factor] \label{thm_split_cz} Let $(M, g, \mu)$ be a weighted Riemannian
  manifold with probability measure $\mu=\mathrm{e}^{-V}$. If for some
  $\rho>0$,
  \begin{equation}
  \text{$\mathrm{CD}(\rho, \infty)$ is satisfied and $\lambda_1=\rho$}, 
  \end{equation}
  then $(M,g, \mu)$ is isometric to a product weighted Riemannian manifold 
  \begin{equation}
    (\mathbb{R}^{k_1},\left|\cdot\right|_2,\gamma_{k_1,\rho^{-1}})\times (M',g',\mu')
    =(\mathbb{R}^{k_1} \times M',\left|\cdot\right|_2 \oplus g', \gamma_{k_1,\rho^{-1}} \otimes \mu')
  \end{equation}
  where 
  \begin{itemize}
  \item $k_1$ is the dimension of the eigenspace of $-\mathcal{L}$ associated with the eigenvalue $\lambda_1$
  \item $\gamma_{k_1,\rho^{-1}}$ is the centered Gaussian law on
    $\mathbb{R}^{k_1}$ with covariance matrix $\rho^{-1}\mathrm{Id}_{k_1}$
  \item $(M', g', \mu')$ is a weighted Riemannian manifold satisfying $\mathrm{CD}(\rho, \infty)$.
  \end{itemize}
\end{theorem}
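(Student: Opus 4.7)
The plan is to extract the geometry from rigidity of Lichnerowicz's inequality, then apply a de Rham-type splitting. First I would fix an eigenfunction $f \in E_1$ and integrate the pointwise Bochner inequality, $\Gamma_2(f) \geq \rho \Gamma(f)$, against $\mu$. The left-hand side equals $\int (\mathcal{L}f)^2 d\mu = \rho^2 \int f^2 d\mu$ after an integration by parts (Stokes' formula for weighted manifolds), while the right-hand side equals $\rho \int |\nabla f|^2 d\mu = \rho \cdot \rho \int f^2 d\mu$ by the eigenfunction equation. Hence the integrated inequality saturates, forcing $\Gamma_2(f) = \rho \Gamma(f)$ pointwise. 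Using the Riemannian identity
\begin{equation*}
  \Gamma_2(f) = \|\nabla^2 f\|_{\mathrm{HS}}^2 + (\operatorname{Ric} + \nabla^2 V)(\nabla f, \nabla f),
\end{equation*}
and the fact that each term on the right, after subtracting $\rho|\nabla f|^2$, is nonnegative under CD$(\rho,\infty)$, I conclude $\nabla^2 f \equiv 0$ and $(\operatorname{Ric} + \nabla^2 V)(\nabla f, \nabla f) = \rho |\nabla f|^2$ everywhere.

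The vanishing of $\nabla^2 f$ means $\nabla f$ is a parallel vector field. Pick an $L^2(\mu)$-orthonormal basis $(f_1,\ldots,f_k)$ of $E_1$. By the diffusion computation already carried out in Lemma~\ref{lem_proj_ou} (or directly from Remark~\ref{rem_eigen_grad}), $\langle \nabla f_i, \nabla f_j\rangle = \Gamma(f_i,f_j) = \rho\,\delta_{ij}$ pointwise, so the $\nabla f_i$ form $k$ globally parallel, pointwise orthogonal vector fields of constant length $\sqrt{\rho}$. Under completeness of $(M,g)$ (which follows from the CD$(\rho,\infty)$ assumption with $\rho>0$, e.g.\ via the Bakry-Émery exponential integrability of distance functions), the classical de Rham decomposition theorem then yields a global isometric splitting $(M,g) \cong (\mathbb{R}^k, |\cdot|_2) \times (M',g')$, in which the $\nabla f_i$ span the Euclidean factor and each $f_i$ is an affine function of the corresponding Euclidean coordinate.

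Next I would identify the potential along the split. Since $\nabla^2 f_i = 0$, we have $\Delta f_i = 0$, and the eigenvalue equation $\mathcal{L} f_i = -\rho f_i$ reduces to $\nabla V \cdot \nabla f_i = \rho f_i$. Writing $f_i = \sqrt{\rho}\,(x_i - m_i)$ in the adapted Euclidean coordinates of the $\mathbb{R}^k$ factor, this forces $\partial_{x_i} V = \rho (x_i - m_i)$ for $i=1,\ldots,k$, while $\partial_{x_i} V$ is independent of the $M'$ coordinates. Integrating,
\begin{equation*}
  V(x,y) = \frac{\rho}{2}\sum_{i=1}^k (x_i - m_i)^2 + \widetilde{V}(y), \qquad (x,y) \in \mathbb{R}^k \times M',
\end{equation*}
which immediately identifies $\mu = \gamma_{k,\rho^{-1}} \otimes \mu'$ with $\mu' = \mathrm{e}^{-\widetilde{V}}\mathrm{vol}_{g'}$ (up to normalization). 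Finally, the CD$(\rho,\infty)$ condition for $(M',g',\mu')$ follows from tensorization: since CD additively splits over Riemannian products and the Gaussian factor saturates CD$(\rho,\infty)$ exactly, the validity of CD$(\rho,\infty)$ for the product $(M,g,\mu)$ transfers to the residual factor.

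The main obstacle is the second paragraph: invoking de Rham's theorem rigorously in the weighted, noncompact setting, which requires completeness and a careful check that the parallel distribution spanned by the $\nabla f_i$ integrates globally (not just locally) to a Euclidean factor. Cheng and Zhou~\cite{CZ17} handle this via a careful analysis of the flow of $\nabla f_i$ and verification that it defines a proper isometric action of $\mathbb{R}^k$; this is the technical heart of the splitting, and the rest of the argument above is then essentially bookkeeping around it.
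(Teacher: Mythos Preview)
Your proposal is correct and follows essentially the same route as the paper's own sketch (given in the proof of Lemma~\ref{lem_rig}, with the full argument deferred to \cite{CZ17}): saturate the integrated Bochner inequality to force $\nabla^2 f=0$, use the resulting parallel vector fields to split off a Euclidean factor, and then read off the Gaussian potential from the eigenvalue equation. The only point to tidy is that the pointwise identity $\Gamma(f_i,f_j)=\rho\,\delta_{ij}$ should be justified directly from the parallelism of the $\nabla f_i$ together with the integrated orthonormality (as in Lemma~\ref{lem_rig}), rather than by forward reference to Lemma~\ref{lem_proj_ou} or Remark~\ref{rem_eigen_grad}, and completeness of $(M,g)$ is a standing hypothesis rather than a consequence of $\mathrm{CD}(\rho,\infty)$.
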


This was proved by \cite{CZ17} in the setting
of smooth weighted manifolds, and by \cite{GKKO20} in the more general setting
of RCD spaces. Splitting theorems for manifolds satisfying a curvature
constraint and optimizing certain geometric quantities is a well-studied
problem in Riemannian geometry, going back to the Cheeger--Gromoll
splitting theorem for non-negatively curved manifolds containing infinite
geodesics.

We now state a structure lemma which enters the proof of Theorem~\ref{thm_split_cz}. 

\begin{lemma}[Rigidity for eigenfunctions when $\lambda_1 =
  \rho$] \label{lem_rig} Under the setting of Theorem~\ref{thm_split_cz}, and
  through the isometry that it provides, any element of $E_1$ is of the form
  $a \cdot p(x) $ with $p$ the projection on the Euclidean factor of dimension
  $k_1$, $a \in \mathbb{R}^{k_1}$, $|a| = \sqrt{\rho}\|f\|_{L^2(\mu)}$. Moreover, if
  $f_i = a_i \cdot p(x)$, for $i=1,2$, are two orthogonal eigenfunctions, then
  $a_1\perp a_2$.
\end{lemma}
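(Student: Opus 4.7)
The plan is to use the product structure from Theorem \ref{thm_split_cz} together with tensorization of spectra on product spaces, and then exploit the fact that Theorem \ref{thm_split_cz} already outputs the Euclidean factor with dimension exactly $\dim E_1$ to pin down the eigenfunctions.

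First I would pass to the product picture. Through the isometry, the generator decomposes additively as $\mathcal{L}=\mathcal{L}_1\oplus\mathcal{L}_2$, where $\mathcal{L}_1=\Delta_{\mathbb{R}^k}-\rho\,x\cdot\nabla_x$ is the OU generator with invariant law $\gamma_{k,\rho^{-1}}$ and $\mathcal{L}_2$ is the Langevin generator on $(M',g',\mu')$, which still satisfies $\mathrm{CD}(\rho,\infty)$ and so has spectral gap $\geq\rho$. By \cite[Proposition 6.7]{GMS15}, both operators have purely discrete spectrum, and the product structure $L^2(\mu)\cong L^2(\gamma_{k,\rho^{-1}})\otimes L^2(\mu')$ gives an $L^2$-orthonormal eigenbasis of $\mathcal{L}$ consisting of tensor products of eigenfunctions of $\mathcal{L}_1$ and $\mathcal{L}_2$, with total eigenvalue equal to the sum of the two one-factor eigenvalues. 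The spectrum of $-\mathcal{L}_1$ is $\{0,\rho,2\rho,\dots\}$ and its $\rho$-eigenspace is exactly the $k$-dimensional span of the coordinate functions $x_1,\ldots,x_k$.

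Now I would decompose an arbitrary $f\in E_1$ in this product basis. Since the eigenvalue $\rho$ can only be realised as a sum of a one-factor eigenvalue of $-\mathcal{L}_1$ and a one-factor eigenvalue of $-\mathcal{L}_2$ with bidegree $(\rho,0)$ or $(0,\rho)$, the function $f$ splits as $a\cdot p(x)$ plus a function of $y\in M'$ lying in the $\rho$-eigenspace of $-\mathcal{L}_2$. But the $(\rho,0)$ part already contributes a $k$-dimensional subspace, and since Theorem \ref{thm_split_cz} defines $k$ to be exactly $\dim E_1$, no room is left for the $(0,\rho)$ part. It follows that $\rho$ is not even an eigenvalue of $-\mathcal{L}_2$, and every $f\in E_1$ has the form $f=a\cdot p(x)$ for some $a\in\mathbb{R}^k$.

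The quantitative statements then reduce to elementary Gaussian moment computations: because $\int y_iy_j\,d\gamma_{k,\rho^{-1}}(y)=\rho^{-1}\delta_{ij}$, one has $\int (a\cdot p)(b\cdot p)\,d\mu=\rho^{-1}\,a\cdot b$, which yields both $|a|^2=\rho\|f\|_{L^2(\mu)}^2$ and the equivalence between $L^2(\mu)$-orthogonality of two such eigenfunctions and Euclidean orthogonality of the associated vectors. The genuinely delicate step is the dimension-matching argument: it works because Theorem \ref{thm_split_cz} identifies the Euclidean factor with a Gaussian piece of dimension \emph{equal to} $\dim E_1$, not merely some a priori sub-Gaussian factor; without that input one could not exclude extra $\rho$-eigenfunctions supplied by $(M',g',\mu')$.
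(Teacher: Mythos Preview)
Your argument is correct, but it proceeds along a different line from the paper's own proof. The paper works \emph{inside} the proof of the splitting theorem: it applies the integrated Bochner formula
\[
\int -\Gamma(f,\cL f)\,\mathrm{d}\mu \;\geq\; \rho\!\int\Gamma(f)\,\mathrm{d}\mu \;+\; \int\|\Hess f\|_{\mathrm{HS}}^2\,\mathrm{d}\mu
\]
to an eigenfunction with eigenvalue $\lambda_1=\rho$, which forces $\Hess f\equiv 0$ and hence that $f$ is affine; the existence of a non-trivial parallel vector field $\nabla f$ is then what drives the splitting itself. The norm relation $|a|^2=\rho\|f\|_{L^2}^2$ and the orthogonality of the $a_i$ are read off from $\Gamma(f)=|a|^2$ and a polarisation identity. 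By contrast, you take Theorem~\ref{thm_split_cz} as a black box, tensorise the spectrum over the product $\R^k\times M'$, and use the dimension match $k=\dim E_1$ to exclude any $(0,\rho)$ contribution from the $M'$ factor; the quantitative statements then drop out of the Gaussian covariance. Your route is cleaner if one is content to quote the splitting, and it makes the role of the equality $k=\dim E_1$ very transparent; the paper's route, on the other hand, explains \emph{why} eigenfunctions are affine and how this fact underlies the splitting in the first place.
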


\begin{proof}
  What follows is a broad sketch, where we focus on justifying the form of the
  eigenfunctions, but do not discuss in too much detail the splitting of the
  space, that was established in \cite{CZ17}. See also \cite{GKKO20} for a
  full proof in the non-smooth setting of RCD spaces. From the integrated
  Bochner formula, for any $g$ in the domain of $\mathcal{L}$ we have
  \begin{equation}
    \int-\Gamma(g,\mathcal{L} g)\mathrm{d}\mu 
    \geq \rho \int\Gamma(g)\mathrm{d}\mu + \int\|\mathrm{Hess}(g)\|_{\mathrm{HS}}^2\mathrm{d}\mu.
  \end{equation}
  Taking $g$ to be an eigenfunction with eigenvalue $\lambda_1 = \rho$, we get
  \begin{equation}
    \rho \int\Gamma(f)\mathrm{d}\mu
    \geq \rho \int\Gamma(f)\mathrm{d}\mu + \int\|\mathrm{Hess}(f)\|_{\mathrm{HS}}^2\mathrm{d}\mu
  \end{equation}
  which forces $\mathrm{Hess}(f) = 0$ almost everywhere, and thus everywhere
  since eigenfunctions are smooth. Therefore $f$ is affine, and non-constant.
  In particular, $\nabla f$ is a non-trivial parallel vector field, which
  forces the splitting of the manifold in a product form $\mathbb{R} \times M'$, along
  which $f$ is only an affine function of the first coordinate. Repeating this
  strategy for successive orthogonal eigenfunctions gives a splitting with a
  $k_1$-dimensional Euclidean factor. See \cite{CZ17} for full details. And once
  the splitting is established, we can view the eigenfunctions as functions on
  $\mathbb{R}^{k_1}$.

  Since for an affine function $f = a\cdot x + b$, we have $\Gamma(f) = |a|^2$, and
  \begin{equation}
    |a|^2 =\int\Gamma(f)\mathrm{d}\mu = -\int f(\mathcal{L} f)\mathrm{d}\mu = \rho\|f\|_{L^2(\mu)}^2.
  \end{equation}
  Let us now compute $\Gamma(f_1, f_2)$ where $f_1, f_2$ are orthogonal
  eigenfunctions. We have
  \begin{equation}
    \Gamma(f_1, f_2) = \frac{1}{4}(\Gamma(f_1 + f_2) - \Gamma(f_1-f_2)) 
                     = \frac{\rho}{4}(\|f_1 + f_2\|_{L^2(\mu)}^2 - \|f_1 - f_2\|_{L^2(\mu)}^2) 
                     = 0
  \end{equation}
  where we used the fact that $f_1 \pm f_2$ are also eigenfunctions, with the
  same eigenvalue $\lambda_1$. But since
  $\Gamma(f_1, f_2) = \langle a_1, a_2\rangle$, the vectors driving $f_1$ and
  $f_2$ must be orthogonal.
\end{proof}

\end{document}